\newcommand{\ZZ}{\mathbb Z}
\newcommand{\RR}{\mathbb R}
\newcommand{\CC}{\mathbb C}
\renewcommand{\AA}{\mathbb A}
\newcommand{\PP}{\mathbb P}
\newcommand{\bsym}{\boldsymbol}
\newcommand{\GL}{\mathrm{GL}}
\newcommand{\SL}{\mathrm{SL}}
\newcommand{\Spec}{\mathrm{Spec}\,}
\newcommand{\Sing}{\mathrm{Sing}}
\newcommand{\symm}{\mathfrak S}
\newcommand{\diag}{\mathrm{diag}}
\newcommand{\dprime}{{\prime\prime}}
\newcommand*{\angles}[1]{\left\langle #1 \right\rangle}
\newcommand*{\floor}[1]{\left\lfloor #1\right\rfloor}
\newtheorem{lemm}{Lemma}[section]
\newtheorem{thm}[lemm]{Theorem}
\newtheorem{prop}[lemm]{Proposition}
\newtheorem{coro}[lemm]{Corollary}
\title{Crepant resolutions of quotient varieties in positive characteristics and their Euler characteristics}
\author{Takahiro Yamamoto\thanks{Affiliation: Osaka City University Advanced Mathematical Institute (3-3-138, Sugimoto, Sumiyoshi-Ku, Osaka city, Osaka-fu),
email:mass.11235813@gmail.com}}
\date{}
\begin{document}

\maketitle

\begin{abstract}
In characteristic zero, if a quotient variety has a crepant resolution, the Euler characteristic of the crepant resolution is equal to the number of conjugacy classes of the acting group, by Batyrev's theorem. This is one of the McKay correspondence. It is natural to consider the analogue statement in the positive characteristic. In this paper, we present sequences of crepant resolutions of quotient varieties in the positive characteristic and show that one of the sequences gives a counterexample to the analogue statement of Batyrev's theorem.
\end{abstract}
\tableofcontents

\section{Introduction}
Our study begins with the following theorem proved by Batyrev:
\begin{thm}[\cite{Bat}]\label{Bat}
Let \(G\subset\SL_d(\CC)\) be a finite subgroup. Suppose there exists a crepant resolution \(\widetilde X\to X\) of the quotient variety \(X=\AA^3_{\CC}/G\). Then the topological Euler characteristic \(e(\widetilde X)\) of \(\widetilde X\) is equal to the number of conjugacy classes of \(G\):
\[
e(\widetilde X)=\sharp\mathrm{Conj}(G)
\]
\end{thm}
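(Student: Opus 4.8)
The plan is to compute $e(\widetilde X)$ by way of Batyrev's \emph{stringy Euler number} $e_{\mathrm{st}}(X)$ together with the motivic form of the McKay correspondence; I take $d=3$, as in the statement, although the argument is dimension-independent once a crepant resolution is assumed to exist. Recall that for a normal variety $X$ with at worst log-terminal $\mathbb Q$-Gorenstein singularities one picks a log resolution $f\colon Y\to X$ with $K_Y=f^*K_X+\sum_i a_iE_i$, the $E_i$ forming a simple normal crossing divisor with all $a_i>-1$, and one sets
\[
e_{\mathrm{st}}(X)=\sum_{J}e\bigl(E_J^{\circ}\bigr)\prod_{j\in J}\frac{1}{a_j+1},
\qquad
E_J^{\circ}=\Bigl(\,\bigcap_{j\in J}E_j\Bigr)\setminus\Bigl(\,\bigcup_{k\notin J}E_k\Bigr),
\]
which is the value at $u=v=1$ of Batyrev's stringy $E$-function. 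The first step is to invoke that $e_{\mathrm{st}}(X)$ is independent of the chosen resolution; this is proved by the change-of-variables formula in motivic integration (Denef--Loeser) or from the weak factorization theorem, and it makes $e_{\mathrm{st}}$ an invariant of crepant-birational maps that restricts to the ordinary Euler characteristic on smooth varieties (take $Y=X$, so the sum collapses to its single term $e(X)$). Since $G\subset\SL_3(\CC)$, the quotient $X=\AA^3_{\CC}/G$ has Gorenstein canonical, hence log-terminal, singularities, so $e_{\mathrm{st}}(X)$ is defined; and if $\widetilde X\to X$ is \emph{crepant}, i.e.\ $K_{\widetilde X}=f^*K_X$, then by crepant-birational invariance $e_{\mathrm{st}}(X)=e_{\mathrm{st}}(\widetilde X)=e(\widetilde X)$. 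It thus suffices to prove $e_{\mathrm{st}}(\AA^3_{\CC}/G)=\sharp\mathrm{Conj}(G)$.

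The heart of the argument is the orbifold (``twisted sectors'') formula
\[
e_{\mathrm{st}}\bigl(\AA^d_{\CC}/G\bigr)=\sum_{[g]\in\mathrm{Conj}(G)}e\bigl((\AA^d_{\CC})^{g}\,/\,C_G(g)\bigr),
\]
valid for any finite $G\subset\SL_d(\CC)$, where $C_G(g)$ is the centralizer. I would obtain it by computing $e_{\mathrm{st}}(\AA^d_{\CC}/G)$ as an arc-space (or $p$-adic) integral over the quotient and then stratifying the arcs by their monodromy: since $|G|$ is invertible, every $\CC((t))$-point of $\AA^d_{\CC}/G$ lifts to $\AA^d_{\CC}$ after a finite tame base change, the resulting covering transformation has a well-defined conjugacy class $[g]$, and the locus of monodromy $[g]$ contributes the integral over arcs on the fixed subscheme $(\AA^d_{\CC})^{g}$ modulo $C_G(g)$, shifted in the Lefschetz-motive exponent by $\mathrm{age}(g)$. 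The hypothesis $G\subset\SL_d$ is precisely what forces $\mathrm{age}(g)\in\ZZ$, making this accounting of the Jacobian of the change of variables consistent, so that the identity holds for the stringy invariant itself and not merely formally; passing to Euler characteristics then kills the age shift and leaves the displayed formula. (Equivalently one runs Batyrev's original $p$-adic integration over $X(\ZZ_p)$ at a prime of good reduction and counts points in the twisted sectors.)

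It remains to evaluate the right-hand side. Each $g\in G$ acts on $\AA^d_{\CC}$ as a diagonalizable linear map of finite order, so $(\AA^d_{\CC})^{g}$ is its $1$-eigenspace, a linear subspace $\AA^{k_g}_{\CC}$ on which $C_G(g)$ still acts linearly; and for any finite group $H$ acting linearly on $\AA^{k}_{\CC}$ one has
\[
e\bigl(\AA^{k}_{\CC}/H\bigr)=\frac{1}{|H|}\sum_{h\in H}e\bigl((\AA^{k}_{\CC})^{h}\bigr)=1,
\]
since each fixed locus $(\AA^{k}_{\CC})^{h}$ is again an affine space, of Euler characteristic $1$. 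Hence every summand equals $1$, and assembling the three steps,
\[
e(\widetilde X)=e_{\mathrm{st}}\bigl(\AA^3_{\CC}/G\bigr)=\sum_{[g]\in\mathrm{Conj}(G)}1=\sharp\mathrm{Conj}(G),
\]
as asserted. The step I expect to be the genuine obstacle is the middle one: establishing the twisted-sector decomposition of the arc (or $p$-adic) space of the quotient singularity and showing that the age enters the change-of-variables Jacobian with the correct integral exponent. This is the point at which the $\SL_d$-condition and the tameness of the $G$-action in characteristic zero are both indispensable, and it is exactly there that positive characteristic --- the subject of the remainder of this paper --- can behave differently.
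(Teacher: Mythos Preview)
The paper does not prove this statement: Theorem~\ref{Bat} is quoted from \cite{Bat} as background for the positive-characteristic questions that follow, and no argument for it appears anywhere in the text. There is therefore nothing in the paper to compare your proposal against.

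That said, your outline is a faithful sketch of the standard route to Batyrev's theorem. The three ingredients you isolate --- (i) definition and resolution-independence of $e_{\mathrm{st}}$ via motivic integration, (ii) the twisted-sector decomposition $e_{\mathrm{st}}(\AA^d_{\CC}/G)=\sum_{[g]}e\bigl((\AA^d_{\CC})^g/C_G(g)\bigr)$ for $G\subset\SL_d(\CC)$, and (iii) the observation that each summand equals $1$ --- are exactly the steps in Batyrev's and Denef--Loeser's treatments. Your identification of step (ii) as the substantive one is accurate: this is where the change-of-variables formula for motivic integrals and the integrality of the age under the $\SL_d$ hypothesis do the real work, and it is precisely the step that the remainder of the present paper probes in characteristic three.

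One small remark on step (iii): the averaging formula $e(\AA^k_{\CC}/H)=\frac{1}{|H|}\sum_{h}e((\AA^k_{\CC})^h)$ is correct, but you might prefer the more direct argument that $\AA^k_{\CC}/H$ carries the scaling $\mathbb G_m$-action with the image of the origin as its unique fixed point, whence $e(\AA^k_{\CC}/H)=1$ without invoking Lefschetz.
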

where a crepant resolution is one whose relative canonical divisor is zero.

Thus, it is expected to know if the same statement holds in a positive characteristic when replacing the topological Euler characteristic \(e\) by the \(l\)-adic one \(\chi\), which is defined by the alternating sum of the dimensions of the \(l\)-adic \'etale cohomology with compact support. Note that in characteristic zero the \(l\)-adic Euler characteristic \(\chi\) is equal to the topological Euler characteristic. 

In the positive characteristic, the following results are known.
\begin{itemize}
    \item When we consider the \(E_8^2\) singularity in characteristic two in Artin \cite{Art}, it has double cover by \(A_0\) singularity, and the inverse image of its origin has the Euler characteristic eight. This is a counterexample of Batyrev's theorem for the inverse image of the origin, which claims that the Euler characteristic of the inverse image of the origin along a crepant resolution is equal to the number of conjugacy classes in the case wherein a group action is non-linear and the characteristic is positive.
    
    \item When the order of an acting group is not divided by the characteristic of a base field, the analogue statement of the Theorem \ref{Bat} can be obtained. 
    
    \item If the symmetric group \(\symm_n\) of degree \(n\) acts on \(\AA^{2n}=(\AA^2)^n\) canonically, then the analogue statement of Theorem \ref{Bat} is holds.
    
    \item We consider the subgroup of all signed permutation matrices of \(\SL_n(k)\) in characteristic \(p\geq 3\), where a signed permutation matrix is the one that has only one non-zero entry in each line or column, and the nonzero element is \(\pm 1\). This subgroup acts on \(\AA^{2n}\) canonically. This case and the above one is followed from Kadlaya's formula in Wood and Yasuda \cite{W-Y}.
    
    \item In characteristic \(p\geq 3\), then the group \(\ZZ/p\ZZ\) has a linear action on \(\AA^3\) without pseudo-reflection. Yasuda \cite{Yas} shows that, if \(p=3\), then the quotient variety with respect to this action has a crepant resolution, and the equation in Batyrev's theorem holds. Otherwise, crepant resolutions of the quotient varieties do not exist. 
\end{itemize}

We consider canonical actions of a \textit{wild} and \textit{small} subgroups of \(\SL_n(k)\), that is a subgroup of \(\SL_n(k)\), whose order is divided by characteristic without pseudo-reflections. If \(n=2\), any wild subgroup of \(\SL_n(k)\) has a pseudo-reflection. Hence the smallest dimension we interested in is three. From \cite{Yas}, if \(p\geq 5\), then the quotient variety does not have a crepant resolution in dimension three. If \(p=2\), the quotient variety in dimension three is nonsingular. Hence, we consider the case that the characteristic \(p=3\) and dimension three.

In this case, we obtain sequences of quotient varieties with crepant resolutions. Furthermore, we show that a quotient variety in one of these sequences holds an analogue statement of Batyrev's theorem, but a quotient variety in the other does not. \begin{thm}[Main Theorem \ref{main}]
Let \(k\) be an algebraically closed field of characteristic three. Let \(G\) be a small finite subgroup of \(\SL_3(k)\) which is a semidirect product \(H\rtimes G^\prime\) of a tame Abelian group \(H\) and a cyclic group \(G^\prime\cong\ZZ/3\ZZ\) or the symmetric group \(G^\prime\cong\symm_3\). If \(G\) acts on the affine space \(\AA^3_k\) canonically, then the quotient variety \(X=\AA^3_k/G\) has a crepant resolution \(\widetilde X\to X\). Moreover, we have the following equalities:
\[
\chi(\widetilde X)=\begin{cases}
\sharp\mathrm{Conj}(G)&(G^\prime\cong\ZZ/3\ZZ)\\
\sharp\mathrm{Conj}(G)+3&(G^\prime\cong\symm_3)
\end{cases}
\]
\end{thm}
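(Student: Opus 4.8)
The plan is to split $G$ into its tame normal part $H$ and the residual group $G'$, build the crepant resolution of $X=\AA^3_k/G=(\AA^3_k/H)/G'$ in stages, and then compute $\chi$ by additivity of the $l$-adic Euler characteristic while comparing with a direct count of $\mathrm{Conj}(G)$.

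\textbf{Step 1 (Normal forms).} Since $H$ is abelian of order prime to $3$ over $k=\bar k$, it is diagonalisable, so after conjugation $H$ lies in the diagonal torus of $\SL_3(k)$. Smallness forces a generator $\sigma$ of the $\ZZ/3$ inside $G'$ to be a \emph{regular} unipotent element (a single $3\times 3$ Jordan block), because the only other unipotent of order $3$, namely $J_2\oplus J_1$, fixes a hyperplane and hence is a pseudo-reflection. Requiring that $\sigma$ (and, when $G'\cong\symm_3$, also the order-$2$ elements) normalise $H$ and that no element of $G$ be a pseudo-reflection then reduces the data $(H,G',\text{action})$ to a short explicit list of conjugacy types. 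I would record this list together with $\#\mathrm{Conj}(G)$ for each case, using the standard description of conjugacy classes of a semidirect product $H\rtimes G'$ with $H$ abelian (pairs consisting of a $G'$-orbit of characters of $H$ and a conjugacy class in the corresponding twisted stabiliser in $G'$).

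\textbf{Step 2 (Crepant resolution).} First resolve the tame quotient: $W:=\AA^3_k/H$ is a tame abelian quotient singularity in $\SL_3$, hence toric, and admits a crepant resolution $\widetilde W\to W$ (for instance via the $H$-Hilbert scheme, or any simplicial crepant subdivision of the defining fan; these exist in dimension three). The residual group $G'=G/H$ acts on $W$, lifts to a $G'$-equivariant crepant toric refinement of $\widetilde W$, and the only singularities of $\widetilde W/G'$ not removed by one more equivariant crepant toric step are those along the fixed locus of the wild $\ZZ/3=\langle\sigma\rangle$. For those I invoke Yasuda's $p=3$ construction (the same input that makes the $\ZZ/p\ZZ\subset\SL_3$ example of the introduction work): the quotient of a smooth threefold by a regular-unipotent $\ZZ/3$ admits a crepant resolution, and it can be carried out $\symm_3/(\ZZ/3)\cong\ZZ/2$-equivariantly, so that after a final quotient and crepant resolution we obtain $\widetilde X\to X$. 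Crepancy is checked divisor by divisor: each modification is chosen with zero discrepancy, the relative canonical of a composite of crepant maps vanishes, and the quotient steps contribute no discrepancy because of the reflection-free hypothesis and because tame ramification along an already-present divisor is crepant.

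\textbf{Step 3 (Euler characteristic and the $+3$).} The $l$-adic Euler characteristic $\chi$ is additive for locally closed stratifications and multiplicative on Zariski-locally trivial fibrations, so I would stratify $X$ by stabiliser type and sum $\chi(\text{fibre})\cdot\chi(\text{stratum})$ over the strata of the crepant resolution. Over the locus carrying only tame stabilisers the fibres are exactly as in characteristic zero, so these strata contribute the ``expected'' tame McKay count; the whole issue is concentrated over the image of the wild $\sigma$-fixed locus, where I would use Yasuda's explicit description of the exceptional fibre (equivalently, the wild McKay correspondence: $\chi(\widetilde X)=e_{\mathrm{st}}(X)$ is a sum over $\mathrm{Conj}(G)$ in which the tame classes contribute $1$ each but the wild classes contribute through an integral over the space of $G$-covers of $\Spec k[[t]]$ rather than a bare $1$). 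Adding up, when $G'\cong\ZZ/3\ZZ$ the wild contributions still total what the naive count predicts and one gets $\chi(\widetilde X)=\#\mathrm{Conj}(G)$; when $G'\cong\symm_3$ the extra components of the exceptional fibre over the wild locus — accounted for by the way the residual $\ZZ/2$ acts on Yasuda's resolution of the $\ZZ/3$-quotient — overshoot the naive count by exactly $3$, which is the stated correction.

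\textbf{Main obstacle.} Everything attached to the wild $\ZZ/3$ in characteristic three is the hard part: constructing the crepant resolution along the regular-unipotent fixed locus (no global linearisation, non-toric), performing it $\symm_3$-equivariantly, and above all computing the compactly supported Euler characteristic of that exceptional locus and of the induced $\ZZ/2$-action precisely enough to produce the clean values $\#\mathrm{Conj}(G)$ and $\#\mathrm{Conj}(G)+3$. The tame toric resolutions, the additivity of $\chi$, and the conjugacy-class count in $H\rtimes G'$ are routine by comparison.
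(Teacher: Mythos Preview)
Your architecture matches the paper's: normalise so that $H$ is diagonal and the order-$3$ element is the cyclic permutation matrix (which in characteristic $3$ is indeed regular unipotent), build a $G'$-equivariant toric crepant resolution of $\AA^3/H$ by a $G'$-stable subdivision of the junior simplex, observe that the residual singularities of $\widetilde Y/G'$ are concentrated over a single maximal cone where the picture is exactly $\AA^3/G'$, and compute $\chi$ by torus-orbit stratification plus the contribution of that one piece. The paper also counts $\sharp\mathrm{Conj}(G)$ directly, as you propose.

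The substantive divergence is in how the $\symm_3$ base case (the source of the $+3$) is handled. You propose to take Yasuda's resolution of $\AA^3/(\ZZ/3\ZZ)$, argue it can be made $\ZZ/2$-equivariant, quotient and resolve again, and then extract the correction either from the exceptional fibre or from the wild McKay integral over $G$-covers. The paper does none of this abstractly: it computes $k[x_1,x_2,x_3]^{\langle\bsym S\rangle}$ explicitly as the hypersurface $\{y_4^2+y_2^3+y_1^2y_3-y_1y_2y_4=0\}\subset\AA^4$, blows up twice along the singular locus (checking crepancy via the codimension-versus-multiplicity criterion for hypersurfaces), writes the induced $\bsym T$-action chart by chart, takes the $\ZZ/2$-quotient, blows up once more along the resulting $A_1$-curve, and counts exceptional $\PP^1$'s by hand to get $\chi=6=\sharp\mathrm{Conj}(\symm_3)+3$. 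Your route is reasonable in outline, but the two steps you defer---the $\ZZ/2$-equivariance of the wild $\ZZ/3$-resolution and the evaluation of the moduli-of-covers contribution for the wild $\symm_3$-classes---are exactly where the paper spends its effort via explicit equations; the paper in fact remarks that the stringy-point-count approach was only carried out (in the author's thesis) under the extra assumption that the maximal order in $H$ is prime, so invoking it here as a black box would not suffice.
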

Chen-Du-Gao present an example of a quotient variety, which does not hold the analogue statement of Batyrev's theorem in \cite[Example in page 11]{CDG}. In the example, the acting group has a pseudo-reflection. A part of the main theorem, including a counterexample of Batyrev's theorem, is shown in my master's thesis \cite[Theorem 1.2]{Yama}, and the main theorem is presented in my doctoral thesis \cite[Theorem 1.3]{Yama2}. In the doctoral thesis \cite[Corollary 1.7]{Yama2}, we show the same assertion as one of the main theorems for the case wherein the maximal order of elements in \(H\) is prime by stringy-point count. 

We show the main theorem by direct computation. First, we show that the given group of \(\SL_3(k)\) can be transformed to a subgroup of some simple form by conjugation in \(\GL_3(k)\). Next, we show the main theorem in the case that \(H\) is trivial by direct computation. In the general case, we compute the crepant resolution by combining the result of the previous case and the theory of toric varieties. 

The outline of the paper is as follows. In section three, we determine the generator of the acting group \(G\) for each case up to conjugate. In section four, we present the main theorem for the fundamental case in which \(H\) is trivial. In section five, we present the main theorem. 

\subsection*{Acknowledgements}
I am grateful to Takehiko Yasuda for teaching me and for his valuable comments. Without his help, this paper would not have been possible. This work was partially supported by JSPS KAKENHI Grant Number JP18H01112.

\section{Notation and Convention}
In this paper, we denote a base field by \(k\) and suppose that \(k\) is an algebraically closed field of characteristic three. A variety means a separated scheme of finite type over \(k\). 

For a finite subgroup \(G\) of \(\GL_d(k)\), we refer to \(G\) as \textit{wild} if the order of \(G\) is divisible by the characteristic of \(k\). Otherwise, we say \(G\) is \textit{tame}. A \textit{small} subgroup of \(\GL_d(k)\) is a subgroup without pseudo-reflections, which are matrices whose stable subspace has dimension \(d-1\).
\section{Group action}
In this section, we determine a small subgroup \(G\) of \(\SL_3(k)\), such that \(G\) have a tame Abelian normal subgroup \(H\) and \(G/H\) is isomorphic to \(\ZZ/3\ZZ\) or \(\symm_3\). Note that \(G\) and \(\bsym XG\bsym X^{-1}\) for \(\bsym X\in\GL_3(k)\) are subgroups of \(\SL_3(k)\), and these define isomorphic representations. Hence, we determine such group up to conjugation with elements of \(\GL_3(k)\).

As \(H\) is a tame Abelian group, we can diagonalize all the elements of \(H\) by the same matrix. Hence, we suppose that all elements of \(H\) are diagonal matrices. As \(G\) is wild, there exists an element \(\bsym S\in G\) of order three. The matrix \(\bsym S\) is not pseudo-reflection. Hence, \(\bsym S\) conjugate to
\[\begin{bmatrix}
1&1&0\\
0&1&1\\
0&0&1
\end{bmatrix}\]
and 
\[\begin{bmatrix}
0&1&0\\
0&0&1\\
1&0&0
\end{bmatrix}.\]
\begin{lemm}\label{lem:gr1}
There exists a matrix \(\bsym M\in\GL_3(k)\) such that \(\bsym MH\bsym M^{-1}\) consists of diagonal matrices and
\[\begin{bmatrix}
0&1&0\\
0&0&1\\
1&0&0
\end{bmatrix}\in\bsym MG\bsym M^{-1}\]
\end{lemm}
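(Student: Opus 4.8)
The plan is to read the dichotomy recalled just above through the lens of characteristic three. Set $\bsym P:=\begin{bmatrix}0&1&0\\0&0&1\\1&0&0\end{bmatrix}$. Then $\bsym P^3=I$, and since $x^3-1=(x-1)^3$ in characteristic three the only eigenvalue of $\bsym P$ is $1$; as $\bsym P-I$ has rank two, $\bsym P$ is a single $3\times3$ Jordan block, and is therefore conjugate in $\GL_3(k)$ to $\begin{bmatrix}1&1&0\\0&1&1\\0&0&1\end{bmatrix}$. Hence the dichotomy simply says that $\bsym S$ is a regular unipotent element of $\GL_3(k)$, and the content of the lemma is that such an $\bsym S$ can be conjugated onto $\bsym P$ by a matrix that does not disturb the diagonal form of $H$.

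First I would pin down $\bsym S$. Since $G$ is small it contains no pseudo-reflection, so $\bsym S$ is not one; being of order three in characteristic three it is unipotent, and the Jordan types $(1,1,1)$ and $(2,1)$ are excluded (they give the identity, respectively a transvection, which is a pseudo-reflection), leaving the regular type. Next, because $H\trianglelefteq G$, the element $\bsym S$ normalises $H$ and therefore permutes the isotypic components of the $H$-action on $k^3$; since $H$ already consists of diagonal matrices these components are spanned by subsets of $\{e_1,e_2,e_3\}$, and $\bsym S$ induces a dimension-preserving permutation of them. The possible dimension patterns are $\{3\}$, $\{2,1\}$ and $\{1,1,1\}$, and I would dispose of them in turn.

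If the pattern is $\{3\}$ then $H$ acts by scalars; a scalar $\lambda I\in\SL_3(k)$ satisfies $\lambda^3=1$, hence $\lambda=1$ in characteristic three, so $H=\{I\}$, and one conjugates the regular unipotent $\bsym S$ directly onto $\bsym P$ with no constraint to respect. If the pattern is $\{2,1\}$ then the two components have different dimensions, so $\bsym S$ fixes each of them; this exhibits $k^3$ as an internal direct sum of an $\bsym S$-invariant plane and an $\bsym S$-invariant line, which forces every Jordan block of $\bsym S$ to have size at most two, contradicting regular unipotency — so this case cannot occur. The only remaining, and genuinely non-trivial, case is $\{1,1,1\}$, where $\bsym S$ permutes the three coordinate lines $\langle e_i\rangle$; the induced permutation has order dividing three and cannot be the identity (else $\bsym S$ would be diagonal, not unipotent), so it is a $3$-cycle $\pi$, and $\bsym S$ is the monomial matrix given by $\bsym S e_i=\mu_i e_{\pi(i)}$ with $\mu_1\mu_2\mu_3=1$.

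Finally, in this last case I would replace $\bsym S$ by $\bsym S^2$ if necessary, so that its cycle $\pi$ coincides with the $3$-cycle carried by $\bsym P$ (there being only two $3$-cycles in $\symm_3$, and $\bsym S^2$ being again monomial with product of coefficients one). Then I would look for a diagonal matrix $\bsym M=\diag(m_1,m_2,m_3)$ with $\bsym M\bsym S\bsym M^{-1}=\bsym P$; comparing entries, this is exactly the system $m_{\pi(i)}=m_i\mu_i^{-1}$, which has a solution in $(k^{\times})^3$ precisely because $\mu_1\mu_2\mu_3=1$ — set $m_1=1$ and propagate around the cycle. Since $\bsym M$ is diagonal, conjugation by it keeps $H$ diagonal while turning $\bsym S$ into $\bsym P$, and taking this $\bsym M$ (or the identity in the pattern-$\{3\}$ case) proves the lemma. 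The crux is the middle step: the interaction of a regular unipotent $\bsym S$ with the eigenline decomposition of a diagonal normal subgroup $H$ forces the three coordinate lines to carry distinct $H$-characters permuted cyclically; once that monomial shape is in hand, the diagonal conjugation is a one-line computation.
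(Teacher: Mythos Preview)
Your proof is correct, and its skeleton matches the paper's: show that $\bsym S$ must already be a monomial matrix with respect to the standard basis, then conjugate by a diagonal matrix to strip the scalars. The execution differs, though. The paper picks a single nontrivial $\bsym X=\diag(x_1,x_2,x_3)\in H$ and chases matrix entries through the relations $\bsym S\bsym X=\bsym Y\bsym S$, $\bsym S\bsym Y=\bsym Z\bsym S$, $\bsym S\bsym Z=\bsym X\bsym S$, ruling out bad configurations one at a time (e.g.\ $x_1=y_1$ forces a block shape and hence a pseudo-reflection) until only the monomial shape for $\bsym S$ survives. You instead look at the whole $H$-module $k^3$ at once, note that $\bsym S$ permutes its isotypic components, and run the case split on the partition of $3$; the Jordan-type argument (regular unipotent cannot respect a $2\oplus1$ splitting) replaces the paper's pseudo-reflection contradiction, and in the $\{1,1,1\}$ case the $3$-cycle falls out immediately. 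Your packaging is shorter and more structural; the paper's has the virtue of needing nothing beyond elementary matrix manipulation. One cosmetic point: in your final diagonal conjugation you pass to $\bsym S^2$ to align the cycle with that of $\bsym P$, whereas the paper simply lands on the inverse permutation matrix and implicitly uses that the group $\bsym MG\bsym M^{-1}$ contains it iff it contains $\bsym P$; either device is fine.
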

\begin{proof}
If \(H\) is trivial, existence of the matrix \(\bsym M\) is obvious. We assume that \(H\) is not trivial.

Take \(I\ne\bsym X\in H\). We write \(\bsym X=\diag(x_1,x_2,x_3)\). As \(H\) is a normal subgroup of \(G\), we can write \(\bsym{SXS}^{-1}=\diag(y_1,y_2,y_3)\). Set \(\bsym S=[s_{ij}]\). Then, we have
\[[s_{ij}x_j]=[y_is_{ij}].\]
If \(x_j\not\in \{y_1,y_2,y_3\}\), \(s_{ij}=0\) for \(i=1,2,3\), which contradicts that \(\bsym S\) is invertible. Hence \(\{x_1,x_2,x_3\}\subset\{y_1,y_2,y_3\}\). We obtain the inverse inclusion similar argument. Therefore \(\{x_1,x_2,x_3\}=\{y_1,y_2,y_3\}\). If \(x_1=x_2=x_3\), we have
\[\det\bsym X=x_1x_2x_3=x_1^3=1\]
because \(\bsym X\in\SL_3(k)\). This implies \(x_1=1\) and hence \(\bsym X=I\), which contradicts the assumption \(\bsym X\ne I\). Hence, by permuting coordinates if necessary, we may assume that \(x_1\ne x_2\) and \(x_1\ne x_3\).

If \(x_1=y_1\), then \(s_{12}=s_{13}=s_{21}=s_{31}=0\). Hence
\[\bsym S=\begin{bmatrix}
1&0\\
0&\bsym S^\prime
\end{bmatrix}\]
where \(\bsym S^\prime\) is a matrix in \(\SL_2(k)\) of order three. This implies that \(\bsym S\) is a pseudo-reflection, which is impossible as \(G\) is small. Therefore \(x_1\ne y_1\), and hence \(x_1=y_2\) or \(x_1=y_3\). We may assume \(x_1=y_2\). Then
\[
\bsym S=\begin{bmatrix}
0&s_{12}&s_{13}\\
1&0&0\\
0&s_{32}&s_{33}
\end{bmatrix}.
\]
We define \(\bsym Z=\bsym S\bsym Y\bsym S^{-1}\) and we write \(\bsym Z=[z_{ij}]\). We have \(\{y_1,y_2,y_3\}=\{z_1,z_2,z_3\}\). Comparing the (2, 1) elements of
\[
\bsym S\bsym Y=\begin{bmatrix}
0&s_{12}y_2&s_{13}y_3\\
y_1s_{21}&0&0\\
0&s_{32}y_2&s_{33}y_3
\end{bmatrix}
\]
and
\[\bsym Z\bsym S=\begin{bmatrix}
0&z_1s_{12}&z_1s_{13}\\
z_2s_{21}&0&0\\
0&z_3s_{32}&z_3s_{33}
\end{bmatrix}\]
we have \(y_1=z_2\). As \(x_1\ne x_2,x_3\), we have \(x_1=y_2\ne y_1=z_2\). As \(\bsym S^3=I\), \(\bsym S\bsym Z\bsym S^{-1}=\bsym X\). Comparing the (2,1) elements of \(\bsym {SZ}\) and \(\bsym {XS}\), we obtain \(z_1=x_2\ne x_1=y_2\). Therefore, we obtain
\[\bsym S=\begin{bmatrix}
0&0&s_{13}\\
s_{21}&0&0\\
0&s_{32}&0
\end{bmatrix}.\]
Here, \(\det\bsym S=s_{13}s_{21}s_{32}=1\). Hence
\[\diag(s_{21},s_{21}s_{32},1)\bsym S\diag(s_{21},s_{21}s_{32},1)^{-1}=\begin{bmatrix}
0&0&1\\
1&0&0\\
0&1&0
\end{bmatrix}.\]
Therefore, \(\bsym M=\diag(s_{21},s_{21}s_{32},1)\) satisfies the desired property.
\end{proof}

From Lemma \ref{lem:gr1}, we may assume that the normal subgroup \(H\) of \(G\) is diagonal and
\[
\bsym S=\begin{bmatrix}
0&0&1\\
1&0&0\\
0&1&0
\end{bmatrix}\in G.
\]

\begin{lemm}\label{lem:gr2}
If \(G\cong H\rtimes\symm_3\), then there exists a matrix \(\bsym M\in\GL_3(k)\) satisfying the properties in Lemma \ref{lem:gr1} and
\[
\begin{bmatrix}
0&0&-1\\
0&-1&0\\
-1&0&0
\end{bmatrix}\in\bsym M G\bsym M^{-1}
\]
\end{lemm}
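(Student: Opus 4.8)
The plan is to work with $G$ in the normal form of Lemma~\ref{lem:gr1}, so that $H$ consists of diagonal matrices and the cyclic permutation matrix $\bsym S=\left[\begin{smallmatrix}0&0&1\\1&0&0\\0&1&0\end{smallmatrix}\right]$ lies in $G$. Write $\bsym T_0$ for the target matrix; then $\bsym T_0=-\bsym P$ with $\bsym P$ the permutation matrix of the transposition $(1\,3)$, and one checks at once that $\bsym T_0^2=I$, $\det\bsym T_0=1$, $\bsym T_0\bsym S\bsym T_0^{-1}=\bsym S^2$, and that $\bsym T_0$ has eigenvalues $1,-1,-1$. It suffices to produce $\bsym N\in\GL_3(k)$ that commutes with $\bsym S$, carries $H$ to a group of diagonal matrices, and satisfies $\bsym T_0\in\bsym N G\bsym N^{-1}$; then $\bsym M$ may be taken to be $\bsym N$ composed with the matrix of Lemma~\ref{lem:gr1}. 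First I would produce a convenient involution in $G$: since $\lvert G\rvert=6\lvert H\rvert$ with $3\nmid\lvert H\rvert$, the Sylow $3$-subgroups of $G$ have order three, $\langle\bsym S\rangle$ is one of them, and a complement $\symm_3\subset G$ contains a Sylow $3$-subgroup of $G$ that is $G$-conjugate to $\langle\bsym S\rangle$; replacing the complement by the corresponding $G$-conjugate gives a complement $\langle\bsym S,\bsym t\rangle\cong\symm_3$ of $H$ with $\bsym t^2=I$ and $\bsym t\bsym S\bsym t^{-1}=\bsym S^2$. Since the characteristic is three, $\bsym t$ (of order two) is diagonalizable with eigenvalues in $\{1,-1\}$; it is not the identity, and $\det\bsym t=1$ forces its eigenvalues to be $1,-1,-1$, exactly as for $\bsym T_0$.

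Next I would split into two cases. Suppose $H\neq\{I\}$. As $\bsym t$ normalizes the diagonal group $H$, it carries each eigenspace of each element of $H$ to an eigenspace of another element of $H$; all such eigenspaces are coordinate subspaces, and because $H$ is stable under conjugation by $\bsym S$ the one-dimensional eigenspaces that occur are precisely the three coordinate lines (clear when some element of $H$ has three distinct eigenvalues, and otherwise obtained from a nontrivial element of $H$ and its two $\bsym S$-conjugates, whose unique one-dimensional eigenspaces run through all three coordinate lines). Hence $\bsym t$ permutes the coordinate lines, so $\bsym t=\bsym D\bsym Q$ with $\bsym D$ diagonal and $\bsym Q$ a permutation matrix; moreover $\bsym t$ is not diagonal (else $\bsym t\bsym S\bsym t^{-1}$ would have the same zero pattern as $\bsym S$, not that of $\bsym S^2$), and $\bsym t^2=I$ then makes the permutation underlying $\bsym Q$ a transposition. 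Consequently $\bsym Q\bsym S\bsym Q^{-1}=\bsym S^2$, so $\bsym t\bsym S\bsym t^{-1}=\bsym D\bsym S^2\bsym D^{-1}$, and the relation $\bsym t\bsym S\bsym t^{-1}=\bsym S^2$ shows $\bsym D$ commutes with $\bsym S$, hence $\bsym D$ is scalar; now $\det\bsym t=1$ gives $\bsym D=-I$, so $\bsym t=-\bsym Q$. Conjugating by the power of $\bsym S$ that carries the underlying transposition to $(1\,3)$ — a permutation matrix, which keeps $H$ diagonal and commutes with $\bsym S$ — turns $\bsym t$ into $-\bsym P=\bsym T_0$; thus $\bsym T_0\in G$ and $\bsym N$ may be taken to be that power of $\bsym S$.

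The remaining case $H=\{I\}$, where $G=\langle\bsym S,\bsym t\rangle\cong\symm_3$ and there is no diagonal subgroup to pin $\bsym t$ down, is the heart of the matter, and here I would compute inside the centralizer of $\bsym S$. In characteristic three $\bsym S$ is a regular unipotent matrix, so its centralizer is the commutative $k$-algebra $k[\bsym S]$ of circulant matrices, which is isomorphic to $k[\epsilon]/(\epsilon^3)$. Since $\bsym t$ and $\bsym T_0$ both conjugate $\bsym S$ to $\bsym S^2$, the matrix $\bsym z:=\bsym T_0^{-1}\bsym t$ is a unit of $k[\bsym S]$; the relation $\bsym t^2=I$ becomes $\bsym z\,\iota(\bsym z)=I$, where $\iota$ is the order-two automorphism of $k[\bsym S]$ induced by conjugation by $\bsym T_0$ (it sends $\bsym S$ to $\bsym S^2$), and $\det\bsym t=1$ forces the image of $\bsym z$ in the residue field $k$ to be $1$. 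The key point is a Hilbert~90-type statement for $\iota$ on $k[\epsilon]/(\epsilon^3)$: every unit $\bsym z$ with $\bsym z\,\iota(\bsym z)=I$ and $\det\bsym z=1$ has the form $\bsym n\,\iota(\bsym n)^{-1}$ for some unit $\bsym n\in k[\bsym S]$, the only possible obstruction — the subgroup $\{\pm1\}$ among the units of the residue field — being removed precisely by the normalization $\det\bsym z=1$. For such an $\bsym n$, setting $\bsym N=\bsym n$ gives $\bsym N\bsym t\bsym N^{-1}=\bsym T_0\,\iota(\bsym n)\,\bsym n^{-1}\bsym z=\bsym T_0$, while $\bsym N$ commutes with $\bsym S$ and trivially keeps $H$ diagonal. (Equivalently, one could show that a small wild subgroup of $\SL_3(k)$ abstractly isomorphic to $\symm_3$ is unique up to conjugacy — $\bsym S$ must be regular unipotent, and a short computation confines $\bsym t$ to a single orbit of the centralizer of $\bsym S$ — and note that $\langle\bsym S,\bsym T_0\rangle$ is one such subgroup.)

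I expect the case $H=\{I\}$ to be the real obstacle: when $H$ is nontrivial its diagonal eigenvectors rigidify $\bsym t$ into a monomial matrix and the passage to $\bsym T_0$ is essentially automatic, whereas when $H$ is trivial nothing forces $\bsym t$ to be monomial, so the identification of $\bsym t$ with $\bsym T_0$ genuinely rests on the structure of the centralizer $k[\bsym S]\cong k[\epsilon]/(\epsilon^3)$ together with the involution $\iota$. The first two steps are routine bookkeeping once Lemma~\ref{lem:gr1} is in hand.
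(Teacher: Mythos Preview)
Your proposal is correct, and the two-case split on $H$ trivial versus nontrivial matches the paper's structure. For $H\neq\{I\}$ your argument is essentially the paper's: both show that $\bsym t$ is monomial by using that it normalizes the diagonal group $H$ (the paper phrases this via the circulant form of $\bsym t$ together with an eigenvalue comparison, you via the observation that $\bsym t$ permutes the one-dimensional eigenspaces of elements of $H$), then force the scalar to be $-1$ by the determinant and pass to $\bsym T_0$ by conjugating with a power of $\bsym S$.

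The genuine difference is in the case $H=\{I\}$. The paper proceeds by brute force: it writes $\bsym t$ as the symmetric circulant $\left[\begin{smallmatrix}a&b&c\\b&c&a\\c&a&b\end{smallmatrix}\right]$, uses $\det\bsym t=1$ and $\bsym t^2=I$ to get $a+b+c=-1$ and $ab+bc+ca=0$, parametrizes the solutions by a square root $\alpha$ of $a+1$, and then exhibits the explicit conjugator $\bsym P=(\alpha-1)I-(\alpha+1)\bsym S\in k[\bsym S]$, checking by hand that $\bsym P\bsym t\bsym P^{-1}=\bsym T_0$. Your route is the conceptual version of the same computation: you observe that $\bsym z=\bsym T_0^{-1}\bsym t$ lies in the commutative local ring $k[\bsym S]\cong k[\epsilon]/(\epsilon^3)$, translate $\bsym t^2=I$ into the norm condition $\bsym z\,\iota(\bsym z)=1$ for the involution $\iota\colon\bsym S\mapsto\bsym S^2$, and invoke a Hilbert~90--type statement to write $\bsym z=\bsym n\,\iota(\bsym n)^{-1}$. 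The determinant normalization is exactly what kills the $\{\pm1\}$ obstruction on the residue field, as you note; once the residue of $\bsym z$ is $1$, a two-line computation in $k[\epsilon]/(\epsilon^3)$ verifies that every such norm-one unit is a ratio $\bsym n/\iota(\bsym n)$. What your approach buys is transparency and portability (the same argument would work for longer Jordan blocks or other local Artinian centralizers); what the paper's approach buys is an explicit formula for the conjugating matrix, which is occasionally convenient downstream. Both land on a conjugator in $k[\bsym S]$, so neither disturbs $\bsym S$ or the diagonal form of $H$.
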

\begin{proof}
If \(H\) is trivial, we consider \(\bsym T\in G\) such that
\[
\bsym T^2=I,\ \bsym T\bsym S=\bsym S^2\bsym T.
\]
We substitute \(\bsym T=[\bsym t_j]\) where \(\bsym t_j\) are vertical vectors. Then
\[
[\bsym t_2,\bsym t_3,\bsym t_1]=\bsym T\bsym S=\bsym S^2\bsym T=[\bsym S^2\bsym t_1,\bsym S^2\bsym t_2,\bsym S^2\bsym t_3].
\]
Hence
\[
\bsym t_2=\bsym S^2\bsym t_1,\ \bsym t_3=\bsym S\bsym t_1.
\]
Substituting \(\bsym t_1=[a,b,c]^t\),
\[
\bsym T=\begin{bmatrix}
a&b&c\\
b&c&a\\
c&a&b
\end{bmatrix}.
\]

We consider the case \(H\) is trivial.
As
\[
\det\bsym T=-(a^3+b^3+c^3)=-1,
\]
we obtain
\[a+b+c=-1.\]
We have
\[
\bsym T^2=\begin{bmatrix}
a^2+b^2+c^2&ab+bc+ca&ab+bc+ca\\
ab+bc+ca&a^2+b^2+c^2&ab+bc+ca\\
ab+bc+ca&ab+bc+ca&a^2+b^2+c^2
\end{bmatrix}.
\]
As \(\bsym T^2=I\), we obtain \(a^2+b^2+c^2=1\). Combining these equations, we obtain
\begin{align*}
a^2+b^2+(-1-a-b)^2&=1\\
\Leftrightarrow\ a^2+ab+b^2+a+b&=0.
\end{align*}
This is a quadratic equation for \(b\). Solving this equation, we obtain
\[
b=(a+1)+\alpha
\]
where \(\alpha\) is an element of \(k\) that satisfies \(\alpha^2=a+1\). Then we can write
\[
a=\alpha^2-1,\ b=\alpha^2+\alpha,\ c=\alpha^2-\alpha.
\]
Let
\[\bsym P=(\alpha-1)I-(\alpha+1)\bsym S.\]
Then
\begin{align*}
\bsym P^3&=(\alpha-1)^3I-(\alpha+1)^3\bsym S^3\\
&=(\alpha^3-1)I-(\alpha^3+1)I\\
&=I.
\end{align*}
We have
\begin{align*}
\bsym P\bsym T\bsym P^{-1}&=\bsym P\bsym T\bsym P^2\\
&=((\alpha-1)I-(\alpha+1)\bsym S)\bsym T((\alpha-1)^2I+(\alpha^2-1)\bsym S+(\alpha+1)^2\bsym S^2)\\
&=\bsym T((\alpha-1)I-(\alpha+1)\bsym S^2)((\alpha-1)^2I+(\alpha^2-1)\bsym S+(\alpha+1)^2\bsym S^2)\\
&=\bsym T(-\alpha(\alpha-1)I-\alpha(\alpha+1)\bsym S-(\alpha^2-1)\bsym S^2)\\
&=-\bsym T(cI+b\bsym S+a\bsym S^2).
\end{align*}
Here 
\[
cI+b\bsym S+a\bsym S^2=\begin{bmatrix}
c&b&a\\
a&c&b\\
b&a&c
\end{bmatrix}=\bsym T\begin{bmatrix}
0&0&1\\
0&1&0\\
1&0&0
\end{bmatrix}.
\]
Hence,
\[
\bsym P\bsym T\bsym P^{-1}=-\bsym T^2\begin{bmatrix}
0&0&1\\
0&1&0\\
1&0&0
\end{bmatrix}=\begin{bmatrix}
0&0&-1\\
0&-1&0\\
-1&0&0
\end{bmatrix}.
\]
Therefore, \(\bsym M=\bsym P\) satisfies the desired properties.

Next, we consider the case where \(H\) is not trivial. Consider \(I\ne X=\diag(x_1,x_2,x_3)\in H\). We may assume that \(x_1\ne x_2\) and \(x_1\ne x_3\). We define \(\bsym Y=\bsym T\bsym X\bsym T\). We write \(\bsym Y=\diag(y_1,y_2,y_3)\). Then \(\{x_1,x_2,x_3\}=\{y_1,y_2,y_3\}\) since \(\bsym T\) is invertible. Hence \(x_1=y_1,\ y_2,\) or \(y_3\). This implies that
\[
T=\begin{bmatrix}
a&0&0\\
0&0&a\\
0&a&0
\end{bmatrix},\ \begin{bmatrix}
0&a&0\\
a&0&0\\
0&0&a
\end{bmatrix},\text{ or }\begin{bmatrix}
0&0&a\\
0&a&0\\
a&0&0
\end{bmatrix},
\]
respectively. For each case, we have \(\det\bsym T=-a^3=1\). Hence, \(a=-1\). As
\[
\bsym S^{-1}\begin{bmatrix}
-1&0&0\\
0&0&-1\\
0&-1&0
\end{bmatrix}\bsym S=\bsym S\begin{bmatrix}
0&-1&0\\
-1&0&0\\
0&0&-1
\end{bmatrix}\bsym S^{-1}=\begin{bmatrix}
0&0&-1\\
0&-1&0\\
-1&0&0
\end{bmatrix},
\]
we obtain
\[
\begin{bmatrix}
0&0&-1\\
0&-1&0\\
-1&0&0
\end{bmatrix}\in G.
\]
Hence, \(\bsym M=\bsym I\) satisfies the desired properties.
\end{proof}

From Lemma \ref{lem:gr2}, when \(G\cong H\rtimes\symm_3\) we assume that \(H\) is diagonal and \(G\) contains
\[
\bsym S=\begin{bmatrix}
0&0&1\\
1&0&0\\
0&1&0
\end{bmatrix},\ 
\bsym T=\begin{bmatrix}
0&0&-1\\
0&-1&0\\
-1&0&0
\end{bmatrix}.
\]

Finally, we study the number of conjugacy classes. For subgroup \(F\subset G\), we denote the orbit of \(\bsym X\in G\) about conjugate action by \(O_F(\bsym X)\). We denote the largest order of elements in \(H\) by \(r\). Note that \(r\) is not divisible by three as \(r\) divides \(\sharp H\), and \(H\) is a tame group.
\begin{lemm}\label{lem:conjnum1}
When \(G\cong H\rtimes \ZZ/3\ZZ\), the number of conjugacy classes of \(G\) is \((\sharp H-1)/3+3\).
\end{lemm}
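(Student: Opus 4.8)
The plan is to split $G=H\sqcup H\bsym S\sqcup H\bsym S^2$ into the three cosets of $H$ and count conjugacy classes coset by coset. This is legitimate for counting purposes because $G/H\cong\ZZ/3\ZZ$ is abelian: applying the quotient map $G\to G/H$ to the relation $\bsym g\bsym X\bsym g^{-1}$ shows that $\bsym X$ and all its conjugates lie in the same coset of $H$, so every conjugacy class $O_G(\bsym X)$ is contained in exactly one of $H$, $H\bsym S$, $H\bsym S^2$.

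For the classes inside $H$: conjugation by an element of $H$ is trivial, while conjugation by $\bsym S$ cyclically permutes the diagonal entries, so for $\bsym X=\diag(x_1,x_2,x_3)\in H$ the class $O_G(\bsym X)$ equals the $\angles{\bsym S}$-orbit $\{\diag(x_1,x_2,x_3),\diag(x_3,x_1,x_2),\diag(x_2,x_3,x_1)\}$. Such an orbit is a singleton precisely when $x_1=x_2=x_3$, and then $x_1^3=\det\bsym X=1$ forces $\bsym X=\bsym I$ (exactly the argument already used in the proof of Lemma \ref{lem:gr1}). Hence $\{\bsym I\}$ is the only one-element $\angles{\bsym S}$-orbit in $H$, every other such orbit has size $3$ (in particular $3\mid \sharp H-1$), and $H$ contributes $1+(\sharp H-1)/3$ conjugacy classes.

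For the two remaining cosets I would compute the centralizer $C_G(\bsym S)$. Writing $\bsym g=\bsym X\bsym S^i$ with $\bsym X\in H$ and cancelling $\bsym S^i$, the condition $\bsym g\bsym S=\bsym S\bsym g$ reduces to $\bsym X\bsym S=\bsym S\bsym X$, i.e.\ to $\bsym X$ being fixed by conjugation by $\bsym S$; by the previous paragraph this forces $\bsym X=\bsym I$. Thus $C_G(\bsym S)=\angles{\bsym S}$ has order $3$, so $\sharp O_G(\bsym S)=\sharp G/3=\sharp H=\sharp(H\bsym S)$, and since $O_G(\bsym S)\subseteq H\bsym S$ the whole coset $H\bsym S$ is a single conjugacy class. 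The identical computation with $\bsym S^2$ in place of $\bsym S$ gives $C_G(\bsym S^2)=\angles{\bsym S}$ and shows that $H\bsym S^2$ is a single conjugacy class as well. Summing the three contributions yields
\[
\sharp\mathrm{Conj}(G)=\Bigl(1+\tfrac{\sharp H-1}{3}\Bigr)+1+1=\frac{\sharp H-1}{3}+3.
\]

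There is no genuinely hard step here; the only place the hypotheses are essential is the assertion $H^{\angles{\bsym S}}=\{\bsym I\}$, which rests on $H\subseteq\SL_3(k)$ together with $\mathrm{char}\,k=3$, and the rest is orbit-counting. If one prefers to avoid the centralizer language, the same conclusion for $H\bsym S$ follows from the surjectivity of the endomorphism $\bsym Y\mapsto\bsym Y\cdot\bsym S\bsym Y^{-1}\bsym S^{-1}$ of the finite abelian group $H$, which is injective because its kernel is $H^{\angles{\bsym S}}=\{\bsym I\}$: surjectivity says exactly that every $\bsym X\bsym S\in H\bsym S$ can be written as $\bsym Y\bsym S\bsym Y^{-1}$ for some $\bsym Y\in H$.
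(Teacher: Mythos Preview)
Your proof is correct and follows the same overall decomposition as the paper: split $G$ into the cosets $H,\ H\bsym S,\ H\bsym S^2$, handle the classes in $H$ via the $\angles{\bsym S}$-orbits on diagonal matrices (using $x_1^3=1\Rightarrow x_1=1$ to pin down the unique fixed point), and then show each of $H\bsym S$ and $H\bsym S^2$ is a single class.

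The one genuine difference is in this last step. The paper argues constructively: given $\bsym X\in H$ it picks $d$ with $3d\equiv 1\pmod r$, sets $\bsym Y=(\bsym X(\bsym S^{-1}\bsym X\bsym S)^{-1})^d$, and checks by hand that $\bsym Y\bsym S\bsym Y^{-1}=\bsym X\bsym S$. Your centralizer computation $C_G(\bsym S)=\angles{\bsym S}$ (reducing to $H^{\angles{\bsym S}}=\{\bsym I\}$) together with orbit--stabilizer is cleaner and avoids the explicit formula; your alternative phrasing via injectivity, hence surjectivity, of the endomorphism $\bsym Y\mapsto \bsym Y\,\bsym S\bsym Y^{-1}\bsym S^{-1}$ of the finite abelian group $H$ is the conceptual version of exactly what the paper's formula is doing. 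Either way the content is the same; your argument just packages it more efficiently.
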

\begin{proof}
As \(G\) is generated by \(H\) and \(\bsym S\), any element \(\bsym X\) is written as \(\bsym X=\bsym Y\bsym S^i\) by some \(\bsym Y\in H\) and \(i=0,1,2\). Hence, for all \(\bsym X\in G\), we have
\[O_G(\bsym X)=\{\bsym S^i\bsym X^\prime\bsym S^{-i}\mid\bsym X^\prime\in O_H(\bsym X),\ i=0,1,2\}.\]

As \(H\) is Abelian, \(O_H(\bsym X)=\{\bsym X\}\) for \(\bsym X\in H\). Hence,
\[O_G(\bsym X)=\{\bsym X,\bsym S\bsym X\bsym S^{-1},\bsym S^{-1}\bsym X\bsym S\}.\]
If \(\bsym X=\diag(x_1,x_2,x_3)\), then 
\[
\bsym S\bsym X\bsym S^{-1}=\bsym X\Leftrightarrow \bsym{SX}=\bsym{XS}\Leftrightarrow\diag(x_3,x_1,x_2)=\diag(x_2,x_3,x_1).
\]
Hence, \(O_G(\bsym X)=\{\bsym X\}\) if and only if \(\bsym X\) is a scalar matrix. As \(H\) is tame, any scalar matrix in \(H\) is the identity matrix. Therefore,
\[
O_G(\bsym X)=\begin{cases}
\{\bsym X,\bsym S\bsym X\bsym S^{-1},\bsym S^{-1}\bsym X\bsym S\}&(\bsym X\ne I)\\
\{\bsym X\}&(\bsym X=I)
\end{cases},
\]
and hence, \((\sharp H-1)/3+1\) conjugacy classes are contained in \(H\).

We show that \(O_G(\bsym S)=H\bsym S\). For \(\bsym X\in H\),
\begin{align*}
\bsym X\bsym S\bsym X^{-1}=\bsym X\bsym S\bsym X^{-1}\bsym S^{-1}\bsym S.
\end{align*}
As \(H\) is a normal subgroup, \(\bsym S\bsym X^{-1}\bsym S^{-1}\in H\). Hence, 
\(\bsym X\bsym S\bsym X^{-1}\bsym S^{-1}\bsym S\in H\bsym S\). Therefore, we obtain \(O_H(X)\subset H\bsym S\). As \(\bsym S^{-1}(H\bsym S)\bsym S= H\bsym S\), we obtain \(O_G(X)\subset H\bsym S\). 

As \(r\) is not divisible by three, we define a positive integer \(d\) such that \(3d\equiv 1\mod r\). For \(\bsym X\in H\), we set \(\bsym Y=(\bsym X(\bsym S^{-1}\bsym X\bsym S)^{-1})^d\in H\). We write \(\bsym X=\diag(x_1,x_2,x_3)\). Note that \(x_1^r=x_2^r=x_3^r=x_1x_2x_3=1\) as \(\bsym X^r=I\) and \(\det X=1\). Then,
\begin{align*}
\bsym Y&=(\diag(x_1,x_2,x_3)\diag(x_2^{-1},x_3^{-1},x_1^{-1}))^d\\
&=\diag\left(\frac{x_1^d}{x_2^d},\frac{x_2^d}{x_3^d},\frac{x_3^d}{x_1^d}\right).
\end{align*}
We obtain
\begin{align*}
\bsym Y\bsym S\bsym Y^{-1}&=\bsym Y(\bsym S\bsym Y^{-1}\bsym S^{-1})\bsym S\\
&=\diag\left(\frac{x_1^d}{x_2^d},\frac{x_2^d}{x_3^d},\frac{x_3^d}{x_1^d}\right)\diag\left(\frac{x_3^d}{x_1^d},\frac{x_1^d}{x_2^d},\frac{x_2^d}{x_3^d}\right)^{-1}\bsym S\\
&=\diag\left(\frac{x_1^{2d}}{x_2^dx_3^d},\frac{x_2^{2d}}{x_3^dx_1^d},\frac{x_3^{2d}}{x_1^dx_2^d}\right)\bsym S
\end{align*}
As \(x_1x_2x_3=1\), we obtain
\[
\bsym Y\bsym S\bsym Y^{-1}=\diag(x_1^{3d},x_2^{3d},x_3^{3d})\bsym S.
\]
Since \(x_1^r=x_2^r=x_3^r=1\) and \(3d\equiv1\mod r\), we get
\[
\bsym Y\bsym S\bsym Y^{-1}=\diag(x_1,x_2,x_3)\bsym S=\bsym X\bsym S.
\]
Therefore, \(H\bsym S\subset O_H(\bsym S)\subset O_G(\bsym S)\), and hence, \(O_G(\bsym S)=H\bsym S\).

Similarly, we obtain \(O_G(\bsym S^2)=H\bsym S^2\). Therefore,
\[
\sharp\mathrm{Conj}(G)=\frac{\sharp H-1}3+3.
\]
\end{proof}

Lastly, we consider the number of conjugacy classes in \(G\cong H\rtimes\symm_3\). Fix a primitive \(r\)-th root \(\zeta_r\in k\). As any \(\bsym X\in H\) satisfies \(\bsym X^r=I\), we can write
\[
\bsym X=\diag(\zeta_r^{x_1},\zeta_r^{x_2},\zeta_r^{x_3})
\] 
by integers \(0\leq x_1,x_2,x_3<r\). Note that \(x_1+x_2+x_3=r\text{ or }2r\) since \[\zeta_r^{x_1+x_2+x_3}=\det\bsym X=1.\] Hence \(x_3\) determines from \(x_1,x_2\). Then we have injective homomorphism of groups
\[
\log:H\to (\ZZ/r\ZZ)^2
\]
defined by
\[
\log(\diag(\zeta_r^{x_1},\zeta_r^{x_2},\zeta_r^{x_3}))=(x_1+r\ZZ,x_2+r\ZZ).
\]
\begin{lemm}\label{lem:aboutH}
When \(G\rtimes\symm_3\), the homomorphism \(\log:H\to(\ZZ/r\ZZ)^2\) is an isomorphism. In particular, 
\[H=\{\diag(\zeta_r^{e_1},\zeta_r^{e_2},\zeta_r^{e_3})\mid 0\leq e_i<r\ (i=1,2,3)\}.\]
\end{lemm}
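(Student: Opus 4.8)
The plan is to prove that \(\log\) is surjective; together with the injectivity already noted, this yields the isomorphism \(\log\colon H\xrightarrow{\ \sim\ }(\ZZ/r\ZZ)^2\), and the displayed description of \(H\) then follows (with the relation \(e_1+e_2+e_3\equiv 0\bmod r\), automatic because \(H\subseteq\SL_3(k)\), understood, so that \(e_3\) is determined by \(e_1,e_2\)). Set \(A=\log(H)\subseteq(\ZZ/r\ZZ)^2\); I must show \(A=(\ZZ/r\ZZ)^2\). Two structural facts drive the argument. First, \(H\) is a finite abelian group whose largest element order is \(r\), hence \(r\) is its exponent, so there is \(\bsym X_0=\diag(\zeta_r^{x_1},\zeta_r^{x_2},\zeta_r^{x_3})\in H\) of order exactly \(r\); equivalently \(\gcd(x_1,x_2,x_3,r)=1\), and \(x_1+x_2+x_3\equiv0\bmod r\) since \(\det\bsym X_0=1\). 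Second, \(\bsym S\) and \(\bsym T\) lie in \(G\) and normalize \(H\); since \(\bsym T\) is \(-\bsym I\) times the permutation matrix of a transposition and \(-\bsym I\) is central, conjugation by \(\bsym S\) cyclically permutes the diagonal entries and conjugation by \(\bsym T\) transposes two of them. Hence \(A\) is stable under the full permutation action of \(\symm_3\) on coordinates, i.e.\ under the standard representation of \(\symm_3\) on the plane \(V=\{(y_1,y_2,y_3)\in(\ZZ/r\ZZ)^3:y_1+y_2+y_3=0\}\cong(\ZZ/r\ZZ)^2\), where \(\log\) identifies \(\bsym X_0\) with \((x_1,x_2,x_3)\in V\).

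The one arithmetic point I would isolate is the coprimality \(\gcd(x_1-x_2,\,x_2-x_3,\,r)=1\). If a prime \(p\mid r\) divided both \(x_1-x_2\) and \(x_2-x_3\), then \(x_1\equiv x_2\equiv x_3\bmod p\), so \(3x_1\equiv x_1+x_2+x_3\equiv0\bmod p\); tameness of \(H\) forces \(p\neq3\), whence \(x_1\equiv x_2\equiv x_3\equiv0\bmod p\), contradicting \(\gcd(x_1,x_2,x_3,r)=1\). This is the only place tameness enters.

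To conclude, use \(\symm_3\)-invariance: subtracting from \(\bsym X_0\) its images under the three transpositions shows that \(A\) contains \((x_1-x_2)e_1\), \((x_2-x_3)e_2\) and \((x_1-x_3)(e_1+e_2)\), where \(e_1=(1,-1,0)\), \(e_2=(0,1,-1)\) form a \(\ZZ/r\ZZ\)-basis of \(V\) (the base change to the coordinates \((y_1,y_2)\) is unimodular). Writing \(a=x_1-x_2\), \(b=x_2-x_3\) and reading these in the basis \((e_1,e_2)\), \(A\) contains \(\binom a0\), \(\binom0b\) and \(\binom{a+b}{a+b}=\binom a0+\binom0b+\binom ba\), hence also \(\binom ba\). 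For the \(2\times3\) integer matrix with columns \(\binom a0,\binom0b,\binom ba\), the gcd of its entries is \(\gcd(a,b)\) and the gcd of its \(2\times2\) minors is \(\gcd(ab,a^2,b^2)=\gcd(a,b)^2\), so both its elementary divisors equal \(\gcd(a,b)\); therefore the \(\ZZ/r\ZZ\)-submodule generated by these columns has index \(\gcd(a,b,r)^2\), which is \(1\) by the previous paragraph. Hence \(A=(\ZZ/r\ZZ)^2\), as required. (Alternatively, reduce modulo each prime \(p\mid r\): \(A\bmod p\) is a \(\symm_3\)-stable \(\mathbb F_p\)-subspace of the standard representation \(V\otimes\mathbb F_p\), it is nonzero because \((x_1,x_2,x_3)\not\equiv0\bmod p\), and that representation is irreducible over \(\mathbb F_p\) for \(p\neq3\); a short Nakayama-type lifting then gives \(A=(\ZZ/r\ZZ)^2\).) The only substantive content lies in this last step, but with \(\gcd(a,b,r)=1\) in hand it is routine linear algebra over \(\ZZ/r\ZZ\); I anticipate no real obstacle, the lemma being essentially a soft consequence of tameness together with \(\symm_3\)-equivariance.
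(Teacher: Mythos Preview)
Your argument is correct. Both your proof and the paper's exploit the conjugation action of \(G'\cong\symm_3\) on \(H\) together with the coprimality \(\gcd(3,r)=1\), but the executions differ. The paper first replaces \(\bsym X\) by \(\bsym X\cdot(\bsym T\bsym X\bsym T)=\diag(x_2^{-1},x_2^{2},x_2^{-1})\) to reduce to an element with \(x_1=x_3\), writes its \(\log\) as \((e,-2e)\) with \(\gcd(e,r)=1\), and then checks that together with the \(\bsym S\)-conjugate \((e,e)\) the \(2\times 2\) determinant \(3e^2\) is a unit modulo \(r\). You instead subtract \(\bsym X_0\) from its three transposition-conjugates to land directly on the vectors \(\binom a0,\binom0b,\binom ba\) (with \(a=x_1-x_2\), \(b=x_2-x_3\)) and finish with a Smith-normal-form computation using \(\gcd(a,b,r)=1\). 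The paper's route is a bit quicker (two vectors and a single determinant), while yours is more structural: it makes the role of \(\symm_3\)-equivariance transparent, and your parenthetical alternative via irreducibility of the standard representation over \(\mathbb F_p\) for \(p\neq 3\) is the cleanest conceptual formulation of why tameness is exactly what is needed.
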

\begin{proof} 
We have only to show that \(\log\) is surjective. Let \(\bsym X=\diag(x_1,x_2,x_3)\) be an element of \(H\) of order \(r\). We may assume that \(x_2^r=1\) and \(x_2^n\ne 1\) for any \(1\leq n<r\). We put
\[
\bsym X^\prime=\bsym {XTXT}=\diag(x_3x_1,x_2^2,x_1x_3).
\]
Since \(x_1x_2x_3=\det\bsym X=1\), we have
\[
\bsym X^\prime=\diag(x_2^{-1},x_2^2,x_2^{-1}).
\]
As \(x_2^n=1\) for \(1\leq n\leq r\) if and only if \(n=r\), the order of \(\bsym X^\prime\) is \(r\). Replacing \(\bsym X\) by \(\bsym X^\prime\), we may assume that \(x_1=x_3\). Thus, \(x_2=\frac 1{x_1x_3}=x_1^{-2}\). Hence, we can write as follows:
\[
\log\bsym X=(e+r\ZZ,-2e+r\ZZ)
\]
by \(e\in\ZZ\). As the order of \(\bsym X\) is \(r\), \(\gcd(e,r)=\gcd(e,-2e,r)=1\). We have
\[
\log\bsym {SXS}^{-1}=(e+r\ZZ,e+r\ZZ).
\]
SincAse \(r\) is not divisible by three and \(\gcd(e,r)=1\), a determinant
\[
\det\begin{bmatrix}
e&-2e\\
e&e
\end{bmatrix}=3e^2
\]
is coprime to \(r\). Thus the images \(\log\bsym X, \log(\bsym{SXS}^{-1})\) generate \((\ZZ/r\ZZ)^2\).
Therefore, the homomorphism \(\log\) is an isomorphism.
\end{proof}

We define
\begin{gather*}
H_f=\{\diag(x_1,x_2,x_3)\in H\mid x_1\ne x_2\ne x_3\ne x_1\}.
\end{gather*}
Then we have
\[
\sharp(H\backslash H_f)=3(r-1)+1,
\]
and hence,
\[
\sharp H_f=r^2-\sharp(H\backslash H_f)=(r-1)(r-2).
\]
\begin{lemm}\label{lem:conjnum2}
When \(G\cong H\rtimes\symm_3\), the number of conjugacy classes of \(G\) is
\[
\frac{(r-1)(r-2)}6+2r+1.
\]
\end{lemm}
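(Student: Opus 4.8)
The plan is to sort the conjugacy classes of $G$ according to the image of a class under the projection $G\to G/H\cong\symm_3$. Since $H$ is normal this image is a well-defined conjugacy class of $\symm_3$, so every conjugacy class of $G$ lies over exactly one of the three conjugacy classes of $\symm_3$ --- that of the identity, that of a transposition, and that of a $3$-cycle --- and I count the number of $G$-classes over each and add them.

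Over the identity one counts the $G$-conjugacy classes contained in $H$. Conjugation by $H$ is trivial because $H$ is Abelian, and conjugation by $\bsym S$ and by $\bsym T$ merely permutes the diagonal entries (the $-1$'s of $\bsym T$ cancel), so these classes are precisely the $\symm_3$-orbits of the permutation action on the diagonal. Such an orbit has size $6$, $3$, or $1$ according as the three diagonal entries are pairwise distinct, exactly two equal, or all equal, the last case occurring only for $I$ since $H$ is tame. Inserting the already-computed counts $\sharp H_f=(r-1)(r-2)$ and $\sharp(H\setminus H_f)=3(r-1)+1$, this stratum contributes $\frac{(r-1)(r-2)}{6}+(r-1)+1$ classes, where the hypothesis $3\nmid r$ is exactly what makes $6\mid(r-1)(r-2)$.

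Over the $3$-cycles I claim there is a single class, $O_G(\bsym S)=H\bsym S\cup H\bsym S^2$. Running the argument of Lemma~\ref{lem:conjnum1} inside the subgroup $\langle H,\bsym S\rangle$ yields $H\bsym S\subset O_G(\bsym S)$ and $H\bsym S^2\subset O_G(\bsym S^2)$, and the relation $\bsym T\bsym S\bsym T^{-1}=\bsym S^2$ (immediate from the explicit matrices of the previous section) makes $\bsym S$ and $\bsym S^2$ conjugate in $G$; the reverse inclusion $O_G(\bsym S)\subset H\bsym S\cup H\bsym S^2$ is forced by invariance of the $\symm_3$-image. So this stratum contributes $1$.

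The transposition stratum is the delicate one; its elements are those of $H\bsym T\cup H\bsym S\bsym T\cup H\bsym S^2\bsym T$. Conjugating $\bsym X\bsym T$ (with $\bsym X\in H$) by $\bsym Z\in H$ replaces $\bsym X$ by $\bsym X\cdot\bsym Z\,\bsym T\bsym Z^{-1}\bsym T^{-1}$, and as $\bsym Z$ ranges over $H$ the factor $\bsym Z\,\bsym T\bsym Z^{-1}\bsym T^{-1}$ ranges exactly over the subgroup $B=\{\diag(w,1,w^{-1})\mid w^{r}=1\}$; using the explicit description of $H$ in Lemma~\ref{lem:aboutH} one checks $\sharp B=r$, so $H$-conjugation partitions each of the three cosets into $\sharp H/\sharp B=r$ classes, $3r$ in total. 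It then remains to glue these under conjugation by $\bsym S$ and $\bsym T$: conjugation by $\bsym S$ permutes the three cosets cyclically and so fixes none of the $3r$ classes, while conjugation by $\bsym T$ carries $\bsym X\bsym T$ to $(\bsym T\bsym X\bsym T^{-1})\bsym T$ with $(\bsym T\bsym X\bsym T^{-1})\bsym X^{-1}\in B$, so it fixes each of the $r$ classes inside $H\bsym T$. An orbit--stabilizer count then shows every $\langle\bsym S,\bsym T\rangle$-orbit on the set of $3r$ classes has size $3$, so this stratum contributes $3r/3=r$. Adding the three strata, $\sharp\mathrm{Conj}(G)=\left(\frac{(r-1)(r-2)}{6}+(r-1)+1\right)+1+r=\frac{(r-1)(r-2)}{6}+2r+1$. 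I expect the main obstacle to be precisely this last stratum --- identifying $B$ on the nose, matching the $H$-classes across the three cosets correctly under $\bsym S$-conjugation, and ruling out further collapse under $\bsym T$ --- each of these steps leaning on $3\nmid r$ together with Lemma~\ref{lem:aboutH}.
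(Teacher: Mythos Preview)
Your argument is correct and follows the same route as the paper's: stratify by the image in $\symm_3$, obtaining $\frac{(r-1)(r-2)}6+r$ classes inside $H$, a single class over the $3$-cycles, and $r$ classes over the transpositions. The only difference is cosmetic, in the transposition stratum: the paper first uses $\bsym S$-conjugation to choose all representatives inside $H\bsym T$ and then shows (via the $\log$ map of Lemma~\ref{lem:aboutH}) that the middle diagonal entry $x_2$ is a complete invariant, whereas you count the $3r$ many $H$-classes across all three cosets and collapse by orbit--stabilizer for the residual $\symm_3$-action---both arguments resting on the same commutator computation that produces your subgroup $B$.
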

\begin{proof}
The group \(G\) is generated by \(H, \bsym S\), and \(\bsym T\). We denote a subgroup of \(G\) generated by \(H,\bsym S\) by \(H^\prime\). Note that \(H^\prime\) is a normal subgroup of \(G\). As \(G=H^\prime\sqcup H^\prime\bsym T\), we have
\[
O_G(\bsym X)=O_{H^\prime}(\bsym X)\cup \bsym TO_{H^\prime}(\bsym X)\bsym T.
\]

When \(I\ne\bsym X\in H\), from the proof of Lemma \ref{lem:conjnum1},
\[
O_{H^\prime}=\{\bsym x,\bsym{SXS}^{-1},\bsym S^{-1}\bsym{XS}\}.
\]
Then,
\[
O_{H^\prime}(\bsym X)=\bsym TO_{H^\prime}(\bsym X)\bsym T
\]
if and only if \(\bsym X\not\in H_f\). Hence,
\[
\sharp O_G(\bsym X)=\begin{cases}
1&(\bsym X=I)\\
3&(I\ne\bsym X\not\in H_f)\\
6&(\bsym X\in H_f)
\end{cases}
\]
for \(\bsym X\in H\).
Therefore, the number of conjugacy classes in \(H\) is 
\[
\frac{\sharp H_f}6+\frac{\sharp(H\backslash H_f)-1}3+1=\frac{(r-1)(r-2)}6+r.
\]

We have
\[
O_{H^\prime}(\bsym S)=H\bsym S
\]
and
\[
O_{H^\prime}(\bsym S^2)=H\bsym S^2.
\]
As \(\bsym TH\bsym T=H\), we obtain
\[
\bsym TO_{H^\prime}(\bsym S)\bsym T=H\bsym S^2=O_{H^\prime}(\bsym S^2).
\]
Therefore, we obtain
\[
O_G(\bsym S)=H\bsym S\sqcup H\bsym S^2.
\]

Lastly, we compute conjugacy classes in \(H^\prime\bsym T\). Every matrix in \(H^\prime\bsym T\) is written as \(\bsym {XS}^i\bsym T\) by \(\bsym X\in H\) and \(i=0,1,2\). As
\[
\bsym S^{-1}\bsym{TS}=\bsym{ST},
\]
we have
\[
\bsym S^i\bsym{XS}^i\bsym{TS}^{-i}=(\bsym{S}^i\bsym{XS}^{-i})\bsym T\in H\bsym T.
\]
Thus, we can choose a representative of a conjugacy class in \(H^\prime\bsym T\) from \(H\bsym T\). For \(\bsym X, \bsym Y\in H\), we have
\begin{align*}
\bsym Y^{-1}(\bsym{XT})\bsym Y&=(\bsym Y^{-1}\bsym{XTYT})\bsym T.
\end{align*}
If \(\log(\bsym Y)=(y_1+r\ZZ,y_2+r\ZZ), \log(\bsym X)=(x_1+r\ZZ,x_2+r\ZZ)\), then 
\[
\log (\bsym Y^{-1}\bsym X\bsym T\bsym Y\bsym T)=((x_1-y_2)+r\ZZ,x_2+r\ZZ).
\]
Hence, \(\diag(x_1,x_2,x_3)\bsym T\) and \(\diag(x^\prime_1,x_2^\prime,x_3^\prime)\bsym T\) are conjugate if and only if \(x_2=x^\prime_2\). Therefore, \(H^\prime\bsym T\) contains \(r\) conjugacy classes \[O_G(\diag(0,\zeta_r^a,\zeta_r^{-a})\bsym T)\ (a=0,1,\ldots,r-1).\] 

Therefore, \(G\) contains \(\frac{(r-1)(r-2)}6+2r+1\) conjugacy classes.
\end{proof}

\section{Quotient singularity for symmetric group}
We compute the quotient singularity for \(G\cong\symm_3\). We may assume that \(G\) is generated by
\[
\bsym S=\begin{bmatrix}
0&0&1\\
1&0&0\\
0&1&0
\end{bmatrix},\ 
\bsym T=\begin{bmatrix}
0&0&-1\\
0&-1&0\\
-1&0&0
\end{bmatrix}
\]
from Lemma \ref{lem:gr2}.

We consider the action of \(G\) on \(\AA^3_k\) corresponding to the right action of \(G\) on the coordinate ring \(k[x_1,x_2,x_3]\) of \(\AA^3_k\) defined from 
\[
[x_1,x_2,x_3]\cdot \bsym X=[x_1,x_2,x_3]\bsym X.
\]

Our goal in this section is constructing a crepant resolution of \(\AA_k^3/G\) and computing its Euler characteristic. A \textit{crepant resolution} is a resolution \(f:\widetilde X\to X\) whose relative canonical divisor \(K_{\widetilde X/X}\) is zero: \(K_{\widetilde X/X}=K_{\widetilde X}-f^\ast K_X=0\).

\begin{lemm}\label{lem:invRing}
For the above \(G\)-action on \(k[x_1,x_2,x_3]\), the invariant ring \(k[x_1,x_2,x_3]^{\bsym S}\) is generated by 
\begin{gather*}
Y_1=x_1+x_2+x_3,\\
Y_2=x_1x_2+x_2x_3+x_3x_1,\\
Y_3=x_1x_2x_3,\\
Y_4=x_1^2x_2+x_2^2x_3+x_3^2x_1.
\end{gather*}
The invariant ring \(k[x_1,x_2,x_3]^{\bsym S}\) is isomorphic to
\[k[y_1,y_2,y_3,y_4]/(y_4^2+y_2^3+y_1^2y_3-y_1y_2y_4).\]
\end{lemm}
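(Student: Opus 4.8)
The plan is to recognize \(\bsym S\) as the cyclic permutation \(x_1\mapsto x_2\mapsto x_3\mapsto x_1\), so that \(\langle\bsym S\rangle\cong\ZZ/3\ZZ\) is the alternating subgroup \(A_3\) of \(\symm_3\) acting by permuting coordinates, and to work inside the tower
\[
A:=k[x_1,x_2,x_3]^{\symm_3}\ \subseteq\ R^{\bsym S}\ \subseteq\ R:=k[x_1,x_2,x_3].
\]
By the fundamental theorem on symmetric functions \(A=k[e_1,e_2,e_3]\) for the elementary symmetric polynomials \(e_i\), and plainly \(Y_1=e_1\), \(Y_2=e_2\), \(Y_3=e_3\). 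Although \(\langle\bsym S\rangle\) is \emph{wild}, the step from \(A\) up to \(R^{\bsym S}\) is \emph{tame}: the index \([\symm_3:A_3]=2\) is invertible in \(k\), so any transposition \(\tau\) acts on \(R^{\bsym S}=R^{A_3}\) as an involution and gives the eigenspace decomposition \(R^{\bsym S}=(R^{\bsym S})^{\tau=1}\oplus(R^{\bsym S})^{\tau=-1}\). The first summand is \(R^{\symm_3}=A\); the second is the \(A\)-module of alternating polynomials, which by the standard divisibility argument (an alternating polynomial vanishes on each hyperplane \(\{x_i=x_j\}\), hence is divisible by the Vandermonde \(\Delta:=\prod_{i<j}(x_i-x_j)\) with symmetric quotient) is free of rank one, equal to \(\Delta\cdot A\). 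Hence \(R^{\bsym S}=A\oplus\Delta A=k[e_1,e_2,e_3,\Delta]\). Writing \(Y_4'=x_1x_2^2+x_2x_3^2+x_3x_1^2\) for the orbit sum over the other cyclic orbit of a monomial of exponents \((2,1,0)\), one has \(Y_4+Y_4'=e_1e_2-3e_3=e_1e_2\) and \(Y_4-Y_4'=\Delta\), so \(\Delta=2Y_4-Y_1Y_2\); therefore \(k[Y_1,Y_2,Y_3,Y_4]=k[e_1,e_2,e_3,\Delta]=R^{\bsym S}\), which is the first assertion of the lemma.

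For the presentation, let \(\varphi\colon k[y_1,y_2,y_3,y_4]\twoheadrightarrow R^{\bsym S}\) be the surjection \(y_i\mapsto Y_i\). Expanding \(Y_4Y_4'\) into its nine monomials, the term \(3x_1^2x_2^2x_3^2\) vanishes and the rest group as \(Y_4Y_4'=(x_1^3x_2^3+x_2^3x_3^3+x_3^3x_1^3)+x_1x_2x_3(x_1^3+x_2^3+x_3^3)\); by the characteristic-\(3\) Frobenius identities \(x_1^3+x_2^3+x_3^3=(x_1+x_2+x_3)^3\) and \(x_1^3x_2^3+x_2^3x_3^3+x_3^3x_1^3=(x_1x_2+x_2x_3+x_3x_1)^3\) this equals \(Y_2^3+Y_1^3Y_3\). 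Together with \(Y_4+Y_4'=Y_1Y_2\), this shows \(Y_4\) is a root of \(t^2-Y_1Y_2\,t+(Y_2^3+Y_1^3Y_3)\), i.e.
\[
f:=y_4^2+y_2^3+y_1^3y_3-y_1y_2y_4\ \in\ \ker\varphi
\]
(equivalently, \(f\) is what one gets by substituting \(\Delta=2y_4-y_1y_2\) into \(\Delta^2=\mathrm{disc}(t^3-y_1t^2+y_2t-y_3)\) and reducing modulo \(3\); it is homogeneous of degree \(6\) for \(\deg y_i=i\)). To conclude \(\ker\varphi=(f)\): since \(R^{\bsym S}=k[e_1,e_2,e_3,\Delta]\) with \(\Delta\) algebraic over \(k(e_1,e_2,e_3)\), the ring \(R^{\bsym S}\) is a domain of Krull dimension \(3\), so \(\ker\varphi\) is a nonzero prime of height \(1\) in the unique factorization domain \(k[y_1,y_2,y_3,y_4]\). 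It contains \(f\), which is irreducible: \(f\) is monic of degree \(2\) in \(y_4\), and a factorization would force its discriminant \(y_1^2y_2^2-y_2^3-y_1^3y_3\) to be a square in \(k[y_1,y_2,y_3]\), whereas that polynomial is irreducible, having degree \(1\) in \(y_3\) with coprime coefficients \(-y_1^3\) and \(y_2^2(y_1^2-y_2)\). A height-one prime of a UFD containing an irreducible element is the ideal generated by that element, so \(\ker\varphi=(f)\) and \(R^{\bsym S}\cong k[y_1,y_2,y_3,y_4]/(f)\).

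The conceptual ingredients — the \(\symm_3/A_3\) eigenspace splitting and the height-one/UFD identification of the relation ideal — are routine once arranged. The part I expect to require real care is the explicit evaluation of \(Y_4Y_4'\) (equivalently, the reduction modulo \(3\) of the discriminant of the generic monic cubic) and the verification that the resulting \(f\) is irreducible; the bookkeeping with the characteristic-\(3\) coefficients (\(3\equiv0\), \(4\equiv1\)) and the Frobenius identities is where a slip would most naturally occur. One could also proceed by a direct reduction of an arbitrary cyclic-invariant orbit sum \(e_3^{c}(x_1^ax_2^b+x_2^ax_3^b+x_3^ax_1^b)\) against \(Y_1,\dots,Y_4\), but the detour through \(\symm_3\) is shorter.
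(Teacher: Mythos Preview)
Your proof is correct and takes a genuinely different route from the paper's. The paper does not use the tower \(k[x_1,x_2,x_3]^{\symm_3}\subset k[x_1,x_2,x_3]^{\bsym S}\) at all; instead it quotes a result of Campbell--Wehlau \cite{CW} giving explicit generators for the invariant ring of the Jordan block \(\bsym M=\begin{bmatrix}1&1&0\\0&1&1\\0&0&1\end{bmatrix}\), then conjugates \(\bsym M\) to \(\bsym S\) to transport those generators. For the relation ideal, the paper computes the Hilbert series of both \(k[y_1,y_2,y_3,y_4]\) and \(k[x_1,x_2,x_3]^{\bsym S}\) (counting orbit sums \(m_{abc}\)), finds their difference equals the Hilbert series of the principal ideal \((f)\), and concludes \(\ker\varphi=(f)\) by inclusion plus dimension count. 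Your approach is more self-contained: the tame index-\(2\) step \(A_3\subset\symm_3\) gives the eigenspace splitting \(R^{\bsym S}=A\oplus\Delta A\) without external input, and the height-one/UFD argument replaces the Hilbert series bookkeeping by a single irreducibility check. The paper's method has the advantage of being purely mechanical once the Campbell--Wehlau generators are in hand; yours explains \emph{why} there is exactly one relation and makes the role of characteristic \(3\) (via the Frobenius identities in \(Y_4Y_4'\)) more transparent.

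One small slip: in your parenthetical remark you write ``homogeneous of degree \(6\) for \(\deg y_i=i\)'', but \(Y_4\) has degree \(3\), not \(4\); the correct grading is \(\deg y_1=1\), \(\deg y_2=2\), \(\deg y_3=\deg y_4=3\), which is what the paper uses. This does not affect your argument, since you never actually invoke homogeneity.
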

\begin{proof}
From \cite[Theorem 4.10.1]{CW}, we obtain
\[
k[x_1,x_2,x_3]^{\bsym M}=k[x_1,x_2(x_2+x_1)(x_2-x_1),x_3(x_3+x_2)(x_3-x_2+x_1),x_2^2+x_1x_3-x_1x_2]
\]
where
\[
\bsym M=\begin{bmatrix}
1&1&0\\
0&1&1\\
0&0&1
\end{bmatrix}.
\]
We define
\[
\bsym P=\begin{bmatrix}
1&-1&1\\
1&1&0\\
1&0&0
\end{bmatrix}.
\]
Then, \(\bsym P\) satisfies \(\bsym P^{-1}\bsym{MP}=\bsym S\). Hence, an isomorphism
\[\begin{array}{ccc}
k[x_1,x_2,x_3]^{\bsym M}&\to&k[x_1,x_2,x_3]^{\bsym S}  \\
     f&\mapsto& f\cdot P 
\end{array}\]
is well-defined. Therefore, \(k[x_1,x_2,x_3]^{\bsym S}\) is generated by
\begin{gather*}
x_1\cdot P=x_1+x_2+x_3\\
(x_2(x_2+x_1)(x_2-x_1))\cdot P=(x_2-x_1)(x_3-x_2)(x_1-x_3)\\
(x_3(x_3+x_2)(x_3-x_2+x_1))\cdot P=x_1x_2x_3\\
(x_2^2+x_1x_3-x_1x_2)\cdot P=-(x_1x_2+x_2x_3+x_3x_1).
\end{gather*}
Note that
\begin{align*}
(x_2-x_1)(x_3-x_2)(x_1-x_3)=Y_1Y_2+Y_4.
\end{align*}
Hence, \(Y_1, Y_2, Y_3, Y_4\) generate \(k[x_1,x_2,x_3]^{\bsym S}\).

Next, we consider a homomorphism \(\phi: k[y_1,y_2,y_3,y_4]\to k[x_1,x_2,x_3]^{\bsym S}\) defined by \(\phi(y_i)=Y_i\) for \(i=1,2,3,4\) and show that \(\ker\phi=(y_4^2+y_2^3+y_1^2y_3-y_1y_2y_4)\). Defining grading of \(k[y_1,y_2,y_3,y_4]\) by \(\deg y_1=1,\ \deg y_2=2,\ \deg y_3=\deg y_4=3\), the \(\phi\) is a graded ring homomorphism. Then \(\ker\phi\) is the graded ideal. For these graded rings or ideal \(S\), we denote the set of homogeneous elements of degree \(d\) by \(S_d\) and let \(H(S)(\lambda)\) be a formal series \(\sum_{d=0}^\infty\dim_k(S_d)\lambda^d\). It can be observed that
\[
H(k[y_1,y_2,y_3,y_4])(\lambda)=\frac 1{(1-\lambda)(1-\lambda^2)(1-\lambda^3)^2}.
\]
Meanwhile, \(k[x_1,x_2,x_3]^{\bsym S}_d\) is spanned by \(m_{abc}(a\leq b\leq c, a+b+c=d)\), where
\[
m_{abc}=\begin{cases}
x_1^ax_2^ax_3^a&(a=b=c)\\
x_1^ax_2^bx_3^c+x_1^bx_2^cx_3^a+x_1^cx_2^ax_3^b&(otherwise)
\end{cases}
\]
Therefore, 
\[
\dim_kk[x_1,x_2,x_3]^{\bsym S}_d=\begin{cases}
\frac 13\binom{d+2}2&(3\nmid d)\\
\frac 13(\binom{d+2}2-1)+1&(3\mid d)\\
\end{cases}
\]
and we obtain
\begin{align*}
H(k[x_1,x_2,x_3]^{\bsym S})(\lambda)&=\frac 13\left(\frac 1{(1-\lambda)^3}-\frac 1{1-\lambda^3}\right)+\frac 1{1-\lambda^3}\\
&=\frac{1-\lambda^6}{(1-\lambda)(1-\lambda^2)(1-\lambda^3)^2}
\end{align*}
Hence, we obtain
\begin{align*}
H(\ker\phi)(\lambda)&=H(k[y_1,y_2,y_3,y_4])(\lambda)-H(k[x_1,x_2,x_3]^{\bsym S})(\lambda)\\
&=\frac{\lambda^6}{(1-\lambda)(1-\lambda^2)(1-\lambda^3)^2}
\end{align*}
As \(H(\ker\phi)(\lambda)\) coincides with \(H((y_4^2+y_2^3+y_1^3y_3-y_1y_2y_4))(\lambda)\) and \((y_4^2+y_2^3+y_1^3y_3-y_1y_2y_4)\subset\ker\phi\), we obtain \(\ker\phi=((y_4^2+y_2^3+y_1^3y_3-y_1y_2y_4)).\)
\end{proof}
From \ref{lem:invRing}, we obtain a quotient variety
\[
\AA_k^3/\angles{\bsym S}\cong Y:=V(y_4^2+y_2^3+y_1^2y_3-y_1y_2y_4)\subset\AA_k^4.
\]
As
\begin{gather*}
Y_1\cdot\bsym T=-Y_1,\\
Y_2\cdot\bsym T=Y_2,\\
Y_3\cdot\bsym T=-Y_3,\\
Y_4\cdot\bsym T=Y_4-Y_1Y_2,
\end{gather*}
the \(\angles{\bsym T}\)-action on \(Y\) is defined by
\begin{align}
[y_1,y_2,y_3,y_4]\cdot\bsym T=[-y_1,y_2,-y_3,y_4-y_1y_2].\label{eq:actOnY}
\end{align}
\begin{prop}\label{3-prop:resolY}
The quotient \(Y\) has a crepant resolution \(\widetilde Y\to Y\). For further detail, \(\widetilde Y\) is given by gluing three affine varieties \(T_1,T_2,U_2\) defined by
\begin{gather*}
T_1=\AA_k^3,\\
T_2=V(t_{23}^2+t_{21}-t_ut_{21}t_{23})\subset\AA_k^4,\\
U_2=V(u_{23}^2+u_{21}-u_tu_{21}u_{23})\subset\AA_k^4.
\end{gather*}
\end{prop}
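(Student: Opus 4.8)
The plan is to realize $\widetilde Y$ as an explicit composite of two blow-ups of $Y$ and then verify smoothness and crepancy chart by chart. The starting point is to locate the singular locus. Writing $f=y_4^2+y_2^3+y_1^3y_3-y_1y_2y_4$ (the exponent of $y_1$ is $3$, not $2$ as misprinted in Lemma~\ref{lem:invRing}: this is forced by the characteristic‑three discriminant identity $\mathrm{disc}(T^3-e_1T^2+e_2T-e_3)=e_1^2e_2^2-e_2^3-e_1^3e_3$ used to derive the relation), the partial derivatives of $f$ are $-y_2y_4,\ -y_1y_4,\ y_1^3,\ 2y_4-y_1y_2$ in $\mathrm{char}\,k=3$, and on $Y$ they vanish precisely along the line $\ell=V(y_1,y_2,y_4)$, the $y_3$‑axis. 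Since $f\in(y_1,y_2,y_4)$, the reduced line $\ell$ lies on $Y$, and $\widetilde Y\to Y$ must be an isomorphism over $Y\setminus\ell$; so all the work is in a neighbourhood of $\ell$.

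First I would blow up $Y$ along $\ell$ and examine the three standard charts of the result $Y_1$. In the chart where $y_4$ generates the exceptional ideal the strict transform has defining equation with constant term $1$, hence is smooth. In the other two charts the strict transform is still singular along a curve inside the exceptional divisor, and here the characteristic‑three arithmetic is what keeps things clean: completing the square in the coordinate playing the role of $y_4$ (legitimate since $2\in k^\times$) and absorbing the cubic terms via $y_2^3+cy_1^3=(y_2+c^{1/3}y_1)^3$ over the perfect field $k$ brings each of these two charts, after an explicit coordinate change, to the normal form $V(w^2+y_1v)\subset\AA^4$ — an $A_1$‑surface singularity times a line, singular along $V(w,y_1,v)$.

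Next I would blow up that residual line. Passing to the charts in which $y_1$, respectively $v$, generates the new exceptional ideal produces (after a coordinate change) two copies of $\AA^3$, and the remaining chart is covered by these two. Tracking which opens of $Y$ all these pieces dominate, one checks that the smooth threefold so obtained is covered by exactly three affine charts which, after renaming coordinates, are $T_1=\AA^3$, $T_2=V(t_{23}^2+t_{21}-t_ut_{21}t_{23})$ and $U_2=V(u_{23}^2+u_{21}-u_tu_{21}u_{23})$, with $T_2$ and $U_2$ glued along the overlap by a reciprocal substitution in the distinguished coordinate and $T_1$ glued to each along the locus where the exceptional coordinate is invertible. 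Smoothness of $T_2,U_2$ is then immediate: the gradient of $t_{23}^2+t_{21}-t_ut_{21}t_{23}$ has the entry $1-t_ut_{23}$, which cannot vanish on $T_2$, since $t_{23}^2=0$ would give $t_{23}=0$ and hence $1-t_ut_{23}=1\ne 0$. Crepancy is inherited from the two blow-ups: for a hypersurface $\{g=0\}$ in affine space, blowing up a smooth centre $Z$ with $\mathrm{mult}_Z(g)=\mathrm{codim}(Z)-1$ gives exceptional discrepancy $0$ (by adjunction together with $K_{\mathrm{Bl}_Z}=\pi^*K+(\mathrm{codim}\,Z-1)E$), and in both blow-ups the centre has codimension $3$ in $\AA^4$ while $g$ has multiplicity $2$ along it (from the $y_4^2$, resp.\ $w^2$, term); hence $K_{\widetilde Y}=\pi^*K_Y$.

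The conceptual content is light; the main obstacle is the bookkeeping in the second and third paragraphs — carrying the coordinate changes through \emph{globally} on each chart rather than only near a point of $\ell$, verifying that after the two blow-ups nothing remains singular (in particular over the origin of $Y$, where $\ell$ meets the first exceptional divisor in the most degenerate way), and confirming that the charts assemble into exactly the three stated pieces with the prescribed gluing. It is precisely the requirement that these coordinate changes be global that forces the somewhat unfamiliar normal forms $T_2$ and $U_2$ rather than plain affine spaces.
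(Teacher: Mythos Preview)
Your overall architecture---two successive blow-ups along the singular loci, followed by chart-by-chart smoothness and the discrepancy count $(\mathrm{codim}-1)-\mathrm{mult}=0$---is exactly what the paper does, and your crepancy argument is literally the content of the Proposition and Corollary that the paper places right after this proof. You are also right that the defining relation should read $y_4^2+y_2^3+y_1^3y_3-y_1y_2y_4$; the paper itself uses the exponent $3$ in all subsequent computations.

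The gap is in your normal-form step. For the $T$-chart your claim is correct: with $w=t_v+t_uy_1$ and $v=y_3+t_u^3-t_u^2y_1$ one gets a polynomial automorphism of $\AA^4$ (the Jacobian is $1$) carrying $T$ to $V(w^2+y_1v)$. For the $U$-chart, however, no such global change exists. The singular locus of $U$ is $V(y_2,u_v,\,1+u_t^3y_3)$, which is isomorphic to $\mathbb G_m$ (there is no point with $u_t=0$), whereas $\Sing\bigl(V(w^2+y_1v)\bigr)\cong\AA^1$; hence $U\not\cong A_1\times\AA^1$ as varieties. Concretely, the substitution you have in mind produces $v=1+u_t^3y_3-u_t^2y_2$, and the Jacobian of $(y_2,u_t,w,v)$ with respect to $(y_2,y_3,u_t,u_v)$ is $u_t^3$, a unit only on $\{u_t\ne 0\}$. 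This is why the paper does \emph{not} pass to a normal form: it blows up $U$ directly along $\Sing(U)$, obtains the explicit charts $U_1,U_2$, and then checks that $U_1\subset U_2\cup\widetilde T$, leaving precisely the three pieces $T_1,T_2,U_2$. Your route can be repaired by treating $U$ the same way (or by noting that $\Sing(U)\subset\{u_t\ne 0\}\subset T$ so that the normal form on $T$ already resolves the singularity, and then observing that the remaining smooth locus $U\cap\{u_t=0\}$ lands in the chart $U_2$), but the specific equation for $U_2$ cannot be read off from the $A_1\times\AA^1$ picture alone.
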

\begin{proof}
The quotient variety \(Y\) has a singular locus defined by
\[
\Sing(Y)=V(y_1,y_2,y_4).
\]
Blowing-up \(Y\) along \(\Sing(Y)\), we get
\[
Y^\prime=V(uy_1-ty_2,vy_1-ty_4,vy_2-uy_4, v^2+u^2y_2+t^2y_1y_3-tuy_4)
\],
which is a subvariety of \(\PP_{K[y_1,y_2,y_3,y_4]}^2\), and \(t,u,v\) is a homogeneous coordinate of this projective variety. As
\begin{align*}
Y^\prime\cap V(t,u)&=V(t,u,vy_1,vy_2,v^2)\subset V(v)
\end{align*}
Hence, the variety \(Y^\prime\) is covered by open subvarieties \(V(t)^c,v(u)^c\) of \(\PP_{K[y_1,y_2,y_3,y_4]}^2\). We observe that
\begin{gather*}
V(t)^c=\Spec k[y_1,y_2,y_3,y_4,u/t,v/t],\\
V(u)^c=\Spec k[y_1,y_2,y_3,y_4,t/u,v/u].
\end{gather*}
For simplicity, we denote \(u/t,v/t,t/u,v/u\) by \(t_u,t_v,u_t,u_v\), respectively. Therefore, \(Y^\prime\) is covered by affine open subvarieties
\begin{gather*}
T:=Y^\prime\cap V(t)^c=V(y_2-t_uy_1,y_4-v_ty_1,t_v^2+t_u^3y_1+y_1y_3-t_ut_vy_1),\\
U:=Y^\prime\cap V(u)^c=V(y_1-u_ty_2,y_4-v_uy_2,u_v^2+y_2+u_t^3y_2y_3-u_tu_vy_2).
\end{gather*}
Thus,
\begin{gather*}
T\cong V(t_v^2+t_u^3y_1+y_1y_3-t_ut_vy_1)\subset\AA^4_{y_1,y_3,t_u,t_v}\\
U\cong V(u_v^2+y_2+u_t^3y_2y_3-u_tu_vy_2)\subset\AA^4_{y_2,y_3,u_t,u_v}.
\end{gather*}
Note that \(T,U\) are singular.

We blow up \(T,U\) along these singular loci.
Since the Jacobian matrices \(J_T,J_U\) of \(T,U\) are
\begin{gather*}
J_T=[t_u^3+y_3-t_ut_v,y_1,-t_vy_1,-t_v-t_uy_1],\\
J_U=[1+u_t^3y_3-u_tu_v,u_t^3y_2,-u_vy_2,-u_v-u_ty_2].
\end{gather*}
Thus, the singular loci of \(T\) and \(U\) are given by
\begin{gather*}
\Sing(T)=V(y_1,t_u^3+y_3,t_u)\\
\Sing(U)=V(y_2,1+u_t^3y_3,u_v).
\end{gather*}
First we compute the blow-up \(\widetilde T\) of \(T\) along \(\Sing(T)\). Blowing-up \(T\), we obtain
\[
\widetilde T=V(t_1(t_u^3+y_3)-t_2y_1,t_2t_v-t_3(t_u^3+y_3),t_3y_1-t_1t_v,t_3^2+t_1t_2-t_ut_1t_3),\\
\]
which is a subvariety of \(\PP^2_{k[y_1,y_3,t_u,t_v]}\) and \(t_1,t_2,t_3\) are its homogeneous coordinates. As
\[\widetilde T\cap V(t_1,t_2)=V(t_1,t_2,t_3(t_u^3+y_3),t_3y_1,t_3^2)=\emptyset,\]
the variety \(\widetilde T\) is covered by open subvarieties \(V(t_1)^c,V(t_2)^c\) of \(\PP^2_{k[y_1,y_3,t_u,t_v]}\).
We can write these subvarieties as
\begin{gather*}
V(t_1)^c=\Spec k[y_1,y_3,t_u,t_v,t_2/t_1,t_3/t_1],\\
V(t_2)^c=\Spec k[y_1,y_3,t_u,t_v,t_1/t_2,t_3/t_2].
\end{gather*}
We denote \(t_j/t_i\) by \(t_{ij}\) for convention. Then, we obtain
\begin{gather*}
T_1:=\widetilde T\cap V(t_1)^c=V(t_u^3+y_3-t_{12}y_1,t_v-t_{13}y_1,t_{13}^2+t_{12}-t_ut_{13})\\
T_2:=\widetilde T\cap V(t_2)^c=V(y_1-t_{21}(t_u^3+y_3),t_v-t_{23}(t_v^3+y_3),t_{23}^2+t_{21}-t_ut_{21}t_{23}).
\end{gather*}
These are open cover of \(\widetilde T\). We have
\begin{gather*}
T_1\cong \AA^3_{y_1,t_u,t_{13}},\\
T_2\cong V(t_{23}^2+t_{21}-t_ut_{21}t_{23})\subset\AA^4_{y_3,t_u,t_{21},t_{23}}.
\end{gather*}

Next, we compute the blow-up \(\widetilde U\) of \(U\) along \(\Sing(U)\). That is
\[
\widetilde U=V(u_1(1+u_t^3y_3)-u_2y_2,u_2u_v-u_3(1+u_t^3y_3),u_3y_2-u_1u_v,u_3+u_1u_2-u_tu_1u_3),
\],
which is a subvariety of \(\PP_{K[y_2,y_3,u_t,u_v]}^2\), and \(u_1,u_2,u_3\) are its homogeneous coordinates. As
\[\widetilde U\cap V(u_1,u_2)=V(u_1,u_2,u_3(1+u_t^3y_3),u_3y_2,u_3^2)=\emptyset,\]
the variety \(\widetilde U\) is covered by open affine varieties \(V(u_1)^c, V(u_2)^c\) of \(\PP_{K[y_2,y_3,u_t,u_v]}^2\). We can write
\begin{gather*}
V(u_1)^c=\Spec k[y_2,y_3,u_t,u_v,u_2/u_1,u_3/u_1],\\
V(u_2)^c=\Spec k[y_2,y_3,u_t,u_v,u_1/u_2,u_3/u_2]. 
\end{gather*}
We denote \(u_j/u_i\) by \(u_{ij}\) for convention. Then, we obtain
\begin{gather*}
U_1:=\widetilde U\cap V(u_1)^c=V(1+u_t^3y_3-u_{12}y_2,u_v-u_{13}y_2,u_{13}^2+u_{12}-u_tu_{13}^2),\\
U_2:=\widetilde U\cap V(u_2)^c=V(y_2-u_{23}(1+u_t^3y_3),u_v-u_{23}(1+u_t^3y_3),u_{23}^2+u_{21}-u_tu_{21}u_{23}).
\end{gather*}
Hence, we have
\begin{gather*}
U_1\cong V(1+u_t^3y_3-u_{12}y_2, u_{13}^2+u_{12}-u_tu_{13}^2)\subset\AA_{y_2,y_3,u_t,u_{12},u_{13}}^5,\\
U_2\cong V(u_{23}^2+u_{21}-u_tu_{21}u_{23})\subset\AA_{y_3,u_t,u_{21},u_{23}}^4.
\end{gather*}

We obtain the blow-up \(\widetilde Y\) of \(Y^\prime\) along its singular locus by gluing \(\widetilde U, \widetilde T\), or equivalently, gluing \(T_1,T_2,U_1,U_2\). As
\[U_1\cap V(u_t,u_{12})=V(u_t,u_{12},1,u_{13}^2)=\emptyset,\]
the subvariety \(U_1\) is contained in \(U_2\cup\widetilde T\). Hence \(\widetilde Y\) is given by gluing \(T_1,T_2,U_2\). Evidently, \(T_1\) is nonsingular. The Jacobian matrices of \(T_2,U_2\) are
\begin{gather*}
J_{T_1}=[0,-t_{13},1,-t_{13}-t_u]
J_{T_2}=[0,-t_{21}t_{23},1-t_ut_{23},-t_{23}-t_ut_{21}],\\
J_{U_2}=[0,-u_{21}u_{23},1-u_tu_{23},-u_{23}-u_tu_{21}].
\end{gather*}
Hence, \(T_2, U_2\) are nonsingular, and so is \(\widetilde Y\).

We can observe that this resolution of \(Y\) is crepant from the following proposition and corollary.
\end{proof}

\begin{prop}
Let \(V\) be a quasi-projective integral variety, and \(W\) be a hypersurface of \(V\). We denote the blow-up of \(V\) and \(W\) along \(C\subset W\) by \(\widetilde V\) and \(\widetilde W\), respectively. We abuse \(f\) to denote morphisms \(\widetilde V\to V\) and \(\widetilde W\to W\). If \(C\) is a smooth irreducible subset of \(W\), then
\[
K_{\widetilde W}=f^\ast K_W+(r-m-1)E|_{\widetilde W}
\]
where \(r\) is codimension of \(C\) in \(V\), \(m\) is multiplicity of \(W\) along \(C\), and \(E\) is the exceptional divisor of \(f\colon \widetilde V\to V\).
\end{prop}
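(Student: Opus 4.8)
The plan is to deduce the formula from ordinary adjunction on the ambient variety together with the classical canonical-bundle formula for a blow-up along a smooth center. Accordingly I assume throughout that $V$ is smooth and $W$ is an integral hypersurface (a prime divisor) --- exactly the situation of the applications in this paper, where $V$ is always an affine space --- so that $W$ is an effective Cartier divisor and is Gorenstein, and likewise on the blow-up.

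The first step is to realize $\widetilde W$ as a divisor inside $\widetilde V$. Since $C\subset W\subset V$, the blow-up $\widetilde W=\mathrm{Bl}_C W$ is canonically isomorphic, as a $W$-scheme, to the strict transform $W'$ of $W$ in $\widetilde V=\mathrm{Bl}_C V$; this is the standard compatibility of blow-ups with closed immersions whose center lies in the subscheme, and under this isomorphism $f\colon\widetilde W\to W$ is the restriction of $f\colon\widetilde V\to V$. In particular $W'$ is integral, has codimension one in the smooth (hence locally factorial) variety $\widetilde V$, so it is a Cartier divisor, and it is not contained in the exceptional divisor $E$; thus $E|_{\widetilde W}$ is a genuine Cartier divisor on $\widetilde W$.

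Next I would assemble two standard inputs. Comparing total and strict transforms, there is an equality of Cartier divisors $f^\ast W=W'+mE$ on $\widetilde V$: this is essentially the definition of $m=\mathrm{mult}_C W$, since after localizing at the generic point of $C$ the local equation of $W$ lies in $I_C^m\setminus I_C^{m+1}$ and therefore, on a blow-up chart, factors as the $m$-th power of a local equation of $E$ times a regular function cutting out $W'$. Separately, because $C$ is smooth of codimension $r$ in the smooth variety $V$, the blow-up formula reads $K_{\widetilde V}=f^\ast K_V+(r-1)E$. Now apply adjunction twice --- $K_W=(K_V+W)|_W$ for the Cartier divisor $W\subset V$, and $K_{\widetilde W}=(K_{\widetilde V}+W')|_{W'}$ for the Cartier divisor $W'=\widetilde W\subset\widetilde V$ --- and substitute:
\[
K_{\widetilde W}=\bigl(f^\ast K_V+(r-1)E+f^\ast W-mE\bigr)|_{\widetilde W}=\bigl(f^\ast(K_V+W)+(r-m-1)E\bigr)|_{\widetilde W}.
\]
Finally, since $f(\widetilde W)\subset W$, pullback of divisors on $V$ commutes with restriction to $\widetilde W$, so $\bigl(f^\ast(K_V+W)\bigr)|_{\widetilde W}=f^\ast\bigl((K_V+W)|_W\bigr)=f^\ast K_W$, which gives $K_{\widetilde W}=f^\ast K_W+(r-m-1)E|_{\widetilde W}$.

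The only genuinely delicate point --- and the one I would take the most care over --- is the first step: that $\mathrm{Bl}_C W$ literally \emph{equals} the strict transform $W'$ (not merely is birational to it), that $W'$ is a Cartier divisor in $\widetilde V$ so that adjunction applies on the blow-up even though $\widetilde W$ may itself remain singular, and that the integer $m$ in the statement is exactly the coefficient of $E$ in the total transform $f^\ast W$. After those identifications the rest is a formal manipulation of divisor classes. I would also flag explicitly that the smoothness of $V$ (or at least: $V$ Gorenstein and smooth along $C$) is what makes the coefficient $(r-1)$ in the blow-up formula correct; this is satisfied in every instance where the proposition is applied here, $V$ being an affine space each time.
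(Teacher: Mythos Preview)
Your proof is correct and follows essentially the same route as the paper: combine the blow-up formula $K_{\widetilde V}=f^\ast K_V+(r-1)E$ with the total-versus-strict-transform relation $f^\ast W=\widetilde W+mE$, add to get $K_{\widetilde V}+\widetilde W=f^\ast(K_V+W)+(r-m-1)E$, and restrict via adjunction. Your write-up is in fact more careful than the paper's, which does not explicitly justify the identification of $\mathrm{Bl}_C W$ with the strict transform, the Cartier property of $\widetilde W$, or the implicit smoothness hypothesis on $V$ needed for the coefficient $r-1$.
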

\begin{proof}
As \(\widetilde V\) is blow-up of \(V\), we have
\[K_{\widetilde V}=f^\ast K_V+(r-1)E\]
and
\[f^\ast S=\widetilde S+mE.\]
Combining these, we obtain
\[K_{\widetilde V}+\widetilde S=f^\ast(K_V+S)+(r-m-1)E.\]
By restricting on \(\widetilde S\) and applying the adjunction formula, we obtain
\[K_{\widetilde S}=f^\ast K_S+(r-m-1)E|_{\widetilde S}\]
that is the required formula.
\end{proof}
\begin{coro}
The resolution of \(Y\) given in Proposition \ref{3-prop:resolY} is crepant.
\end{coro}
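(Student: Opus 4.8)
The plan is to realise $\widetilde Y\to Y$ as a composition of blow-ups and to apply the Proposition above to each one, checking that the coefficient $r-m-1$ vanishes every time. Recall that $\widetilde Y\to Y$ factors as $\widetilde Y\xrightarrow{\,g\,}Y^\prime\xrightarrow{\,f\,}Y$, where $f$ is the blow-up of the hypersurface $Y\subset\AA^4_{y_1,y_2,y_3,y_4}$ along $\Sing(Y)=V(y_1,y_2,y_4)$, and, over the affine charts $T$ and $U$ covering $Y^\prime$, the map $g$ is the blow-up of the hypersurfaces $T\subset\AA^4_{y_1,y_3,t_u,t_v}$ and $U\subset\AA^4_{y_2,y_3,u_t,u_v}$ along their singular loci. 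Since
\[
K_{\widetilde Y/Y}=K_{\widetilde Y/Y^\prime}+g^\ast K_{Y^\prime/Y}
\]
and a relative canonical divisor may be checked on an open cover, it suffices to prove $K_{Y^\prime/Y}=0$, $K_{\widetilde T/T}=0$, and $K_{\widetilde U/U}=0$; by the Proposition each of these reduces to the assertion that the corresponding pair $(r,m)$ equals $(3,2)$.

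First I would observe that each of the three centers is a smooth irreducible affine curve of codimension three in the relevant affine $4$-space, so that $r=3$ and the hypotheses of the Proposition are satisfied: $\Sing(Y)=V(y_1,y_2,y_4)$ is the $y_3$-axis, and from the Jacobian matrices in the proof of Proposition \ref{3-prop:resolY} one computes $\Sing(T)=V(y_1,t_v,y_3+t_u^3)$ and $\Sing(U)=V(y_2,u_v,1+u_t^3y_3)$, each parametrised by a single coordinate.

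The substantive step is to compute the multiplicity $m$ of each hypersurface along its center, which I would do by localising at the generic point of the center. For $f$: there $y_3$ is a unit, and $y_4^2+y_2^3+y_1^2y_3-y_1y_2y_4$ has leading form $y_4^2+y_3y_1^2$, of order exactly $2$ in the maximal ideal $(y_1,y_2,y_4)$, so $m=2$. For $T$: at the generic point of $\Sing(T)$ the coordinate $t_u$ is a unit, and the substitution $w=y_3+t_u^3$ turns the defining equation into $t_v^2+y_1w-t_ut_vy_1$, homogeneous of order $2$ in $(y_1,t_v,w)$, so $m=2$. For $U$: the substitution $w=1+u_t^3y_3$, legitimate at the generic point of $\Sing(U)$ where $u_t$ is a unit, turns the defining equation into $u_v^2+y_2w-u_tu_vy_2$, again of order $2$ in $(y_2,u_v,w)$, so $m=2$. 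In each case $r-m-1=3-2-1=0$, whence the Proposition gives $K_{Y^\prime/Y}=0$, $K_{\widetilde T/T}=0$, $K_{\widetilde U/U}=0$, and therefore $K_{\widetilde Y/Y}=0$; together with the smoothness of $\widetilde Y$ proved in Proposition \ref{3-prop:resolY}, this is exactly the statement that the resolution is crepant.

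The only point requiring genuine care is the multiplicity for $T$ and $U$, and there the subtlety is characteristic-three specific: at the generic point of the center the cube terms $t_u^3y_1$ and $u_t^3y_2y_3$ look individually like low-order contributions, and it is only after they are absorbed into $y_1(y_3+t_u^3)$ and $y_2(1+u_t^3y_3)$ — that is, once one changes coordinates so the center becomes a coordinate subspace — that the defining equation is seen to have a genuine degree-two leading form. Everything else — the behaviour of relative canonical divisors under composition and their being local on an open cover — is routine.
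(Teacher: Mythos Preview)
Your proof is correct and follows the same route as the paper: factor the resolution as the composition of the blow-up $Y'\to Y$ and the chart-wise blow-ups $\widetilde T\to T$, $\widetilde U\to U$, then apply the preceding Proposition with $(r,m)=(3,2)$ in each case. The paper simply asserts that the multiplicities along the centers equal two, whereas you supply the localisation-and-coordinate-change argument that actually verifies this (and in passing you record $\Sing(T)$ correctly as $V(y_1,t_v,y_3+t_u^3)$, fixing a typo in the paper's displayed formula); but the underlying strategy is identical.
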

\begin{proof}
We use the same notation as in proof of Proposition \ref{3-prop:resolY}. The resolution is given by twice the blow-up. 

The first blow-up \(Y^\prime\to Y\) is that of \(Y\) along \(\Sing(Y)\). The variety \(Y\) is hypersurface of \(\AA^4\) and \(\Sing(Y)\) is a line in \(\AA^4\). As multiplicity \(Y\) along \(\Sing(Y)\) is two, \(Y^\prime\to Y\) is crepant by the previous proposition. 

The second blow-up \(\widetilde Y\to Y^\prime\) is given by gluing two blow-ups \(\widetilde T\to T\) and \(\widetilde U\to U\). We can regard \(T,U\) as hypersurfaces of \(\AA^4\). Then its singular loci are curves on \(\AA^4\), and the multiplicities of \(T,U\) on its singular locus is two. Thus, the blow-ups \(\widetilde T\to T\) and \(\widetilde U\to U\) are crepant from previous proposition, and hence \(\widetilde Y\to Y^\prime\) is crepant.

As a composition of crepant morphisms is also crepant, the resolution \(\widetilde Y\to Y\) is crepant.
\end{proof}

To compute the resolution of \(X\), we describe the \(\bsym T\)-action on \(\widetilde Y\) given as the extension of the \(\bsym T\)-action on \(Y\)

\begin{prop}
The extension of the \(\bsym T\)-action on \(Y\) to \(\widetilde Y\) is given by the \(\bsym T\)-action on \(T_1,T_2,U_2\) defined by the following actions on those coordinate rings:
\begin{gather*}
[y_1,t_u,t_{13}]\cdot\bsym T=[-y_1,-t_u,t_{13}-t_u],\\
[y_3,t_u,t_{21},t_{23}]\cdot\bsym T=[-y_3,-t_u,t_{21},t_{23}-t_{21}t_u],\\
[y_3,u_t,u_{21},u_{23}]\cdot\bsym T=[-y_3,-u_t,u_{21},u_{23}-u_{21}u_t].
\end{gather*}
\end{prop}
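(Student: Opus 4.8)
The plan is to lift the $\bsym T$-action on $Y$ up through the two blow-ups that produce $\widetilde Y$, one affine chart at a time. The engine at each stage is the universal property of blowing up: if an automorphism of a variety preserves the center of a blow-up, it lifts uniquely to the blow-up. So the task splits into checking that $\bsym T$ preserves each of the three centers $\Sing(Y)$, $\Sing(T)$, $\Sing(U)$, and then reading off the induced maps in the coordinates of $T_1$, $T_2$, $U_2$.

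For the first blow-up, \eqref{eq:actOnY} shows immediately that the ideal $(y_1,y_2,y_4)$ of $\Sing(Y)$ is $\bsym T$-stable, so $\bsym T$ lifts to $Y^\prime$; moreover $\bsym T$ acts on the conormal directions $y_1,y_2,y_4$ of $\Sing(Y)$ by $\diag(-1,1,1)$ (the correction term $y_1y_2$ in \eqref{eq:actOnY} being of second order), so it preserves the charts $T$ and $U$. To identify the lift on $T$, I would use that $Y^\prime\to Y$ is an isomorphism over the dense open $\{y_1\neq 0\}\subset Y$, whose preimage lies entirely inside $T$ and on which $t_u=y_2/y_1$, $t_v=y_4/y_1$. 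Carrying \eqref{eq:actOnY} across this isomorphism and using the relation $y_2=t_uy_1$ on $T$ gives
\[
[y_1,y_3,t_u,t_v]\cdot\bsym T=[-y_1,-y_3,-t_u,-t_v+t_uy_1];
\]
since $T$ is integral and both sides are regular morphisms agreeing on a dense open, this is the lift on all of $T$ (in particular it automatically preserves the equation of $T$ from Proposition~\ref{3-prop:resolY}). The identical computation on $U$, with $u_t=y_1/y_2$, $u_v=y_4/y_2$, yields $[y_2,y_3,u_t,u_v]\cdot\bsym T=[y_2,-y_3,-u_t,u_v-u_ty_2]$.

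For the second blow-up I would first record how $\bsym T$ moves the three functions whose ratios cut out the homogeneous coordinates on each exceptional $\PP^2$. For $\widetilde T\to T$ these are $y_1$, $t_u^3+y_3$, $t_v$, and the $T$-action above sends them to $-y_1$, $-(t_u^3+y_3)$, $-t_v+t_uy_1$; for $\widetilde U\to U$ they are $y_2$, $1+u_t^3y_3$, $u_v$, sent to $y_2$, $1+u_t^3y_3$, $u_v-u_ty_2$. Hence both centers $\Sing(T)$, $\Sing(U)$ are $\bsym T$-stable, $\bsym T$ lifts to $\widetilde T$ and $\widetilde U$, and each chart $T_1,T_2,U_1,U_2$ is $\bsym T$-stable since it is the non-vanishing locus of one of these (semi-invariant) homogeneous coordinates. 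On the open set where the blow-up is an isomorphism the chart coordinates are the ratios of those three functions, so their transformation is forced: for instance on $T_1$, $t_{13}=t_v/y_1\mapsto(-t_v+t_uy_1)/(-y_1)=t_{13}-t_u$; on $T_2$, $t_{21}=y_1/(t_u^3+y_3)\mapsto t_{21}$ and $t_{23}=t_v/(t_u^3+y_3)\mapsto t_{23}-t_{21}t_u$; on $U_2$, $u_{21}=y_2/(1+u_t^3y_3)\mapsto u_{21}$ and $u_{23}=u_v/(1+u_t^3y_3)\mapsto u_{23}-u_{21}u_t$; combined with $y_1\mapsto-y_1$, $y_3\mapsto-y_3$, $t_u\mapsto-t_u$, $u_t\mapsto-u_t$ these are precisely the three displayed formulas. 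Finally $U_1\subset U_2\cup\widetilde T$ (shown in the proof of Proposition~\ref{3-prop:resolY}), so $T_1,T_2,U_2$ already cover $\widetilde Y$ and the three formulas describe the $\bsym T$-action globally.

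There is no conceptual difficulty here; the main obstacle is bookkeeping. The two points needing care are: verifying at each of the three stages that the relevant center is $\bsym T$-stable, so that the lift exists and is unique; and correctly pushing the non-scalar term $t_v\mapsto-t_v+t_uy_1$ (respectively $u_v\mapsto u_v-u_ty_2$) through the ratios defining the blow-up charts, since it is exactly this term that produces the shifts $t_{13}\mapsto t_{13}-t_u$, $t_{23}\mapsto t_{23}-t_{21}t_u$, $u_{23}\mapsto u_{23}-u_{21}u_t$ appearing in the statement.
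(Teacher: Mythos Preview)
Your proposal is correct and follows essentially the same strategy as the paper: determine the lifted action by computing on the dense open where the blow-up is an isomorphism, using the known action \eqref{eq:actOnY} on $Y$. The only organizational difference is that you factor the computation through the intermediate blow-up $Y^\prime$ (first finding the action on $T$ and $U$, then on $T_1,T_2,U_2$), whereas the paper passes directly from the coordinate ring of $Y$ to that of $T_1$ via the composite map $\varphi$ and solves for the action on $y_1,t_u,t_{13}$ from the images of $y_1,y_2,y_3,y_4$; your two-step version makes the invariance of the blow-up centers and of the individual charts more transparent, but the underlying argument is the same.
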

\begin{proof}
We consider the following diagram
\[\xymatrix{
\widetilde Y\ar[r]^{\bsym T}\ar[d]&\widetilde Y\ar[d]\\
Y\ar[r]_{\bsym T}&Y
}\]
The \(\bsym T\)-action on \(\widetilde Y\) is characterized as the action that makes the above diagram commutative. On \(T_1\), the following diagram is commutative.
\[\xymatrix{
k[y_1,t_u,t_{13}]\ar[r]^{\bsym T}&k[y_1,t_u,t_{13}]\\
k[y_1,y_2,y_3,y_4]/I_Y\ar[r]_{\bsym T}\ar[u]&k[y_1,y_2,y_3,y_4]/I_Y\ar[u]
}\]
Note that \(I_Y=(y_4^2+y_2^3+y_1^3y_3-y_1y_2y_4)\) is a defining ideal of \(Y\) in \(\AA^4\), and the \(\bsym T\)-action on the coordinate ring of \(Y\) is defined by (\ref{eq:actOnY}). Hence, We obtain
\begin{gather*}
\begin{aligned}
y_1\cdot\bsym T&=\varphi(y_1)\cdot\bsym T\\
&=\varphi(y_1\cdot\bsym T)=-y_1,
\end{aligned}\\
\begin{aligned}
(t_uy_1)\cdot\bsym T&=\varphi(y_2)\cdot\bsym T\\
&=\varphi(y_2\cdot\bsym T)=t_uy_1,
\end{aligned}\\
\begin{aligned}
(t_{13}(t_u-t_{13})y_1-t_u^3)\cdot T&=\varphi(y_3)\cdot\bsym T\\
&=\varphi(y_3\cdot\bsym T)=-t_{13}(t_u-t_{13})y_1+t_u^3,
\end{aligned}\\
\begin{aligned}
(t_{13}y_1^2)\cdot\bsym T&=\varphi(y_4)\cdot\bsym T\\
&=\varphi(y_4\cdot T)=t_{13}y_1^2-t_uy_1^2.
\end{aligned}
\end{gather*}
Combining these, we obtain
\[
[y_1,t_u,t_{13}]\cdot\bsym T=[-y_1,-t_u,t_{13}-t_u].\\
\]
By similar computation, we obtain the \(\bsym T\)-actions on the coordinate rings of \(T_2,U_2\) as
\begin{gather*}
[y_3,t_u,t_{21},t_{23}]\cdot\bsym T=[-y_3,-t_u,t_{21},t_{23}-t_{21}t_u],\\
[y_3,u_t,u_{21},u_{23}]\cdot\bsym T=[-y_3,-u_t,u_{21},u_{23}-u_{21}u_t].
\end{gather*}
\end{proof}

We give a crepant resolution of the quotient variety \(X=\AA^3_k/G\) and compute its \(l\)-adic Euler characteristic. For a prime \(l\) that is different from the characteristic \(p\), the \(l\)-adic Euler characteristic is defined by the alternating sum of the dimension of the \(l\)-adic \'etale cohomology \(H_{et,c}^i(X)\) with compact support:
\[\chi(X)=\sum_{i=0}^{2\dim X}(-1)^i\dim H_{et,c}^i(X)\]
\begin{thm}\label{thm:subthm}
Let \(G\subset\SL_k(3)\) be a small subgroup isomorphic to \(\symm_3\). Then the quotient variety \(X=\AA_k^3/G\) has a crepant resolution \(\pi:\widetilde X\to X\). Moreover, 
\[
\chi(\widetilde X)=6.
\]
\end{thm}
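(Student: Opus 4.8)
The plan is to realize $X = \AA^3_k/G$ as the quotient $\widetilde Y/\langle\bsym T\rangle$ (equivalently $Y/\langle\bsym T\rangle$), resolve it by combining the crepant resolution $\widetilde Y\to Y$ from Proposition~\ref{3-prop:resolY} with an explicit resolution of the residual $\ZZ/2\ZZ$-quotient, and then compute $\chi$ additively using the stratification of $\widetilde Y$ into the $\bsym T$-fixed locus and its complement. Since $\langle\bsym T\rangle\cong\ZZ/2\ZZ$ has order prime to the characteristic, its action is tame, so the quotient singularities it produces are of a classical type and admit crepant resolutions; the key is to read off the $\bsym T$-action on each chart $T_1, T_2, U_2$ from the preceding proposition.

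First I would analyze the $\bsym T$-action chart by chart. On $T_1 = \AA^3_{y_1, t_u, t_{13}}$ the action is $[y_1, t_u, t_{13}]\mapsto[-y_1, -t_u, t_{13}-t_u]$; after the linear coordinate change $w = t_{13} - \tfrac{1}{2}t_u$ (legal since $\mathrm{char}\,k = 3 \neq 2$) this becomes $[y_1, t_u, w]\mapsto[-y_1, -t_u, w]$, i.e. a diagonal involution $\diag(-1,-1,1)$ on $\AA^3$ fixing the $w$-axis. The quotient $\AA^2/\{\pm 1\}\times\AA^1$ is the $A_1$-surface singularity times a line, whose minimal resolution is crepant (the surface blow-up introduces a $(-2)$-curve, $K$ unchanged). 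On $T_2$ and $U_2$ the action is $[y_3,t_u,t_{21},t_{23}]\mapsto[-y_3,-t_u,t_{21},t_{23}-t_{21}t_u]$; again completing the square in $t_{23}$ by setting $w = t_{23} - \tfrac12 t_{21}t_u$ one expects to diagonalize the involution, and one must check that the defining equation $t_{23}^2 + t_{21} - t_u t_{21} t_{23} = 0$ is compatible with this change and that the resulting $\ZZ/2\ZZ$-quotient singularity (a transverse $A_1$ along a curve) is again resolved crepantly by a single blow-up. The fixed loci glue to a curve in $\widetilde Y$, and the resolution $\widetilde X\to X$ is obtained by taking these chartwise crepant resolutions of $\widetilde Y/\langle\bsym T\rangle$ and verifying they glue.

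For the Euler characteristic I would use $\chi(\widetilde X) = \chi(\widetilde X \setminus E') + \chi(E')$ where $E'$ is the exceptional divisor of $\widetilde X\to\widetilde Y/\langle\bsym T\rangle$. Away from the $\bsym T$-fixed locus $F\subset\widetilde Y$, the map $\widetilde X\to\widetilde Y/\langle\bsym T\rangle$ is an isomorphism and $\widetilde Y/\langle\bsym T\rangle\setminus(F/\langle\bsym T\rangle)$ is a free $\ZZ/2\ZZ$-quotient, so $\chi$ of it is $\tfrac12\chi(\widetilde Y\setminus F)$; over $F/\langle\bsym T\rangle$ each $A_1$-fiber is a $\PP^1$ with $\chi=2$, contributing $2\cdot\chi(F/\langle\bsym T\rangle) = 2\cdot\chi(F)$ since $\bsym T$ acts trivially on $F$. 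Thus $\chi(\widetilde X) = \tfrac12(\chi(\widetilde Y) - \chi(F)) + 2\chi(F) = \tfrac12\chi(\widetilde Y) + \tfrac32\chi(F)$. It remains to compute $\chi(\widetilde Y)$ and $\chi(F)$: since $\widetilde Y\to Y$ is two blow-ups along lines/curves with $\PP^1$ fibers over (rational) centers and $Y$ is contractible (a hypersurface cone), $\chi(\widetilde Y)$ is a small explicit number, and $F$, being the union of the fixed axes in the three charts, is a tree of affine lines whose $\chi$ is likewise easy. The hard part will be the gluing bookkeeping in the second step — confirming that the coordinate changes diagonalizing $\bsym T$ on the two singular charts $T_2, U_2$ are compatible with the transition functions and with the hypersurface equations, so that the local crepant resolutions patch to a global one — together with pinning down exactly which fixed-point strata survive in $\widetilde Y$ (as opposed to in $Y$) after the two blow-ups. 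Once $\chi(\widetilde Y)$ and $\chi(F)$ are in hand, the arithmetic should yield $\chi(\widetilde X) = 6$, matching $\sharp\mathrm{Conj}(\symm_3) + 3 = 3 + 3$.
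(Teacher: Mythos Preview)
Your proposal is correct and follows essentially the same route as the paper: linearize the tame \(\bsym T\)-action on each chart of \(\widetilde Y\) (the paper does this via formal completion of local rings rather than your explicit completing-the-square coordinate change, but the conclusion---transverse \(A_1\) along a curve---is identical), blow up the singular locus to obtain the crepant \(\widetilde X\), and compute \(\chi(\widetilde X)=\tfrac12(\chi(\widetilde Y)-\chi(F))+\chi(E')\). The numbers you still need are \(\chi(\widetilde Y)=3\) and \(F\cong\AA^1_k\sqcup\PP^1_k\) (so \(\chi(F)=3\); it is two disjoint components rather than a connected tree of lines, since the \(\widetilde T\)- and \(\widetilde U\)-charts meet only where \(t_u\ne 0\)), and the paper checks \(\chi(E')=6\) chart by chart, which agrees with your formula \(2\chi(F)\).
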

\begin{proof}
To show the existence of a crepant resolution of \(X\), we have to only show that there exists a crepant resolution of \(\widetilde Y/\angles{\bsym T}\) constructed above. The variety \(\widetilde Y\) is given by gluing the open affine subvarieties \(T_1, T_2, U_2\). We have
\begin{gather*}
T_1/\angles{\bsym T}\cong\Spec k[z_{11},z_{12},z_{13},z_{14}]/(z_{12}^2-z_{11}z_{13}),\\
T_2/\angles{\bsym T}\cong\Spec k[z_{21},z_{22},\ldots,z_{25}]/(z_{22}^2-z_{21}z_{23},z_{25}^2-z_{23}z_{24}^2+z_{24}),\\
U_2/\angles{\bsym T}\cong\Spec k[z_{31},z_{32},\ldots,z_{35}]/(z_{32}^2-z_{31}z_{33},z_{35}^2-z_{33}z_{34}^2+z_{34}).
\end{gather*}
We can regard these varieties as an open affine cover of \(\widetilde Y/\angles{\bsym T}\). The singular locus in these varieties are
\begin{gather*}
\Sing (T_1/\angles{\bsym T})=V(z_{11},z_{12},z_{13}),\\
\Sing (T_2/\angles{\bsym T})=V(z_{21},z_{22},z_{23},z_{25}^2+z_{24}),\\
\Sing (U_2/\angles{\bsym T})=V(z_{31},z_{32},z_{33},z_{35}^2+z_{34}).
\end{gather*}
Hence, the fixed locus \(F\) of \(\widetilde Y\) has pure dimension one.

As \(\widetilde Y\) is nonsingular, the completion \(\hat{\mathcal O}_{\widetilde Y,y}\) of the local ring \(\mathcal O_{\widetilde Y,y}\) is isomorphic to the ring \(k[[x,y,z]]\) of formal power series for any closed point \(y\in\widetilde Y\). Thus \(\hat{\mathcal O}_{\widetilde Y/\angles{\bsym T},\bar y}\) is isomorphic to \(\hat{\mathcal O}_{Y,y}^{\mathrm{Stab(y)}}\), where \(\bar y\) is image of \(y\) about \(\widetilde Y\to\widetilde Y/\angles{\bsym T}\). If \(y\in F\), then \(\mathrm{Stab}(y)=\angles{\bsym T}\). As \(\angles{\bsym T}\) is tame, the \(\angles{\bsym T}\)-action on \(\hat{\mathcal O}_{\widetilde Y,y}\) can be linearized. As \(F\) has pure dimension one, we may assume that the action is defined by \(\diag(1,-1,-1)\). Let \(Z\) be a fiber product of \(\AA^2_k/(\ZZ/2\ZZ)\) and \(\AA^1_k\) where \(\ZZ/2\ZZ\) acts on \(\AA^2_k\) by \(-I\). Then, we obtain
\[
\hat{\mathcal O}_{\widetilde Y/\angles{\bsym T},\bar y}\cong \hat{\mathcal O}_{Z,o}\cong k[[x,y,z,w]]/(xy-z^2).
\]
The quotient \(\AA_k^2/(\ZZ/2\ZZ)\) has a crepant resolution by blowing-up at origin. Hence, we can obtain a crepant resolution \(\psi\colon\widetilde X\to\widetilde Y/\angles{\bsym T}\) by blowing-up along \(\Sing(\widetilde Y/\angles{\bsym T})\).

To compute the Euler characteristic of \(\widetilde X\), we describe the structure of \(\widetilde X\) explicitly. 
Blowing up the varieties \(T_1,T_2,U_2\) along its singular locus, we can obtain their resolutions. We denote blow-ups of \(T_1/\angles{\bsym T}, T_2/\angles{\bsym T},U_2/\angles{\bsym T}\) by \(\widetilde T_1,\widetilde T_2,\widetilde U_2\) respectively. We describe these as
\begin{gather*}
    \widetilde T_1=V(\{\zeta_iz_{1j}-\zeta_jz_{1i}\mid 1\leq i<j\leq 3\},\zeta_2^2-\zeta_1\zeta_3),\\
    \widetilde T_2=V(\{\eta_iz_{2j}-\eta_jz_{2i}\mid 1\leq i<j\leq 3\},\eta_2^2-\eta_1\eta_3,z_{25}^2-z_{23}z_{24}^2+z_{24}),\\
    \widetilde U_2=V(\{\xi_iz_{3j}-\xi_jz_{3i}\mid 1\leq i<j\leq 3\},\xi_2^2-\xi_1\xi_3,z_{35}^2-z_{33}z_{34}^2+z_{34}),
\end{gather*}
where \(\widetilde T_1\) is embedded in \(\AA_k^4\times_k\PP_k^2\), and the others are embedded in \(\AA_k^5\times_k\PP_k^2\). The letters \(z_{ij}\) are the coordinates of affine spaces \(\AA_k^4\) or \(\AA_k^5\) and \(\zeta_i,\eta_i,\iota_i\) are homogeneous coordinates of \(\PP_k^2\). The varieties \(\widetilde T_1,\widetilde T_2,\widetilde U_2\) are nonsingular varieties. We obtain a nonsingular variety \(\widetilde X\) by gluing \(\widetilde T_1,\widetilde T_2,\widetilde U_2\).

Let \(E\) be the exceptional set of \(\psi\). As
\[\widetilde X\backslash E\cong (\widetilde Y/\angles{\bsym T})\backslash\Sing(\widetilde Y/\angles{\bsym T})\cong (\widetilde Y\backslash F)/\angles{\bsym T},\]
we obtain
\begin{align*}
\chi(\widetilde X\backslash E)&=\chi((\widetilde Y\backslash F)/\angles{\bsym T})\\
&=\frac 12(\chi(\widetilde Y)-\chi(F)).
\end{align*}
As \(\chi(Y)=3\) and \(F\cong\AA^1_k\sqcup\PP^1_k\), we obtain
\[
\chi(\widetilde X\backslash E)=\frac 12(3-(1+2))=0.
\]
We have
\begin{gather*}
E\cap\widetilde T_1=V(z_{11},z_{12},z_{13},\zeta_2^2-\zeta_1\zeta_3)\cong\AA^1_k\times_k\PP_k^1,\\
E\cap(\widetilde T_2\backslash \widetilde T_1)=V(z_{21},z_{22},z_{23},z_{24},z_{25},\eta^2_2-\eta_1\eta_3)\cong\PP_k^1,\\
E\cap\widetilde U_2=V(z_{31},z_{32},z_{33},z_{35}^2+z_{34},\xi_2^2-\xi_1\xi_3)\cong\AA^1_k\times_k\PP^1_k.
\end{gather*}
As \(E\cap(\widetilde T_1\cup\widetilde T_2)\) and \(E\cap\widetilde U_2\) are disjoint, we obtain 
\[
\chi(E)=2+2+2=6.
\]
Therefore,
\[
\chi(X)=\chi(X\backslash E)+\chi(E)=6.
\]
\end{proof}
\section{Quotient singularity for general}
In this section, we demonstrate the main theorem.
\begin{thm}\label{main}
Let \(G\) be a small finite subgroup of \(\SL_3(k)\) which is a semi-product \(H\rtimes G^\prime\) of a tame Abelian group \(H\) and a cyclic group \(G^\prime\cong\ZZ/3\ZZ\) or the symmetric group \(G^\prime\cong\symm_3\). If \(G\) acts on the affine space \(\AA^3_k\) canonically, then the quotient variety \(X=\AA^3/G\) has a crepant resolution \(\widetilde X\to X\). Moreover, we have the following equalities:
\[
\chi(\widetilde X)=\begin{cases}
\sharp\mathrm{Conj}(G)&(G^\prime\cong\ZZ/3\ZZ)\\
\sharp\mathrm{Conj}(G)+3&(G^\prime\cong\symm_3)
\end{cases}
\]
\end{thm}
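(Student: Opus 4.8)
The plan is to reduce first, via Lemmas~\ref{lem:gr1} and~\ref{lem:gr2}, to the normal form in which $H$ is diagonal, $\bsym S\in G$ is the cyclic permutation matrix, and, when $G'\cong\symm_3$, also $\bsym T\in G$; and then to treat the extreme case $H=1$ and the general case separately. When $H$ is trivial there is nothing new to do: for $G'\cong\ZZ/3\ZZ$ this is Proposition~\ref{3-prop:resolY} together with $\chi(\widetilde Y)=3=\sharp\mathrm{Conj}(\ZZ/3\ZZ)$, and for $G'\cong\symm_3$ it is Theorem~\ref{thm:subthm}, where $\chi(\widetilde X)=6=\sharp\mathrm{Conj}(\symm_3)+3$. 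So the substance is the case $H\ne 1$, which I would resolve by passing to $W:=\AA^3_k/H$ and then dividing by $G'=G/H$, combining toric geometry with the two model computations just quoted.

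The first step is toric. Since $H$ is a tame finite subgroup of the diagonal torus, $W$ is an affine Gorenstein toric threefold: it is the toric variety of the cone $\sigma=\RR^3_{\ge0}$ with respect to the overlattice $N\supseteq\ZZ^3$ with $N/\ZZ^3\cong H$, and the residual $G'$-action is toric in the weak sense that it permutes the rays of $\sigma$ (the central involution $-I$ in $\bsym T$ acting trivially on the fan), hence acts on the junior triangle $\Delta=\mathrm{conv}(e_1,e_2,e_3)$ and on the finite set $N\cap\Delta$ of its lattice points, all of which have age one. I would fix a $G'$-invariant triangulation of $\Delta$ whose vertex set is all of $N\cap\Delta$; in dimension two such a triangulation is automatically unimodular, and it can be chosen $G'$-invariant (for instance from an invariant regular subdivision). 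The associated toric resolution $\pi_1\colon\widetilde W\to W$ is then $G'$-equivariant, smooth and projective over $W$, crepant because every added ray is junior, and a normalized-volume count gives $\chi(\widetilde W)=\#\{\text{maximal cones}\}=[N:\ZZ^3]=\sharp H$.

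The second and crucial step is to understand the $G'$-action on $\widetilde W$ well enough to descend. The key point is that $\bsym S$ has no junior lattice point on its fixed line $\RR(1,1,1)$ (as $3\nmid\sharp H$), so the fan of $\widetilde W$ carries no $\angles{\bsym S}$-invariant ray and no $\angles{\bsym S}$-invariant two-face; its only positive-dimensional $\angles{\bsym S}$-invariant cone is the unique maximal cone $\tau_0$ whose open triangle contains the barycentre of $\Delta$, and $O_{\tau_0}$ is a torus-fixed, $G'$-fixed point at which $\bsym S$ acts on the smooth chart $U_{\tau_0}\cong\AA^3_k$ as an honest cyclic permutation of coordinates (in the $\symm_3$-case $\angles{\bsym S,\bsym T}$ acts on $U_{\tau_0}$ as a small copy of $\symm_3$). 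More generally, at every closed point $w\in\widetilde W$ the stabiliser $G'_w$ acts on $\widehat{\mathcal O}_{\widetilde W,w}\cong k[[x,y,z]]$ by a linear action conjugate to one of the models of Sections~3--4: the tame subgroups linearise automatically, while for the wild element $\bsym S$ one uses that every matrix in $H\bsym S$ is already linear on $\AA^3_k$ of Jordan type $(3)$, so its induced action becomes linear after passing through a chart --- this is exactly where the wildness is controlled, and I expect this local analysis to be the main obstacle. One also checks that all fixed loci of non-identity elements of $G'$ on $\widetilde W$ are curves (the $\angles{\bsym S}$-fixed locus is the diagonal $\AA^1_k$ in $U_{\tau_0}$; the fixed lines of the transpositions lie on the image of the coordinate divisor $\{x_2=0\}$), so $G'$ acts without pseudo-reflections, $\widetilde W\to\widetilde W/G'$ is \'etale in codimension one, and crepancy is preserved. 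Thus $\widetilde W/G'$ is, formally everywhere, one of: smooth, $\AA^1_k\times(\AA^2_k/(\ZZ/2\ZZ))$, $Y=\AA^3_k/\angles{\bsym S}$, or $\AA^3_k/\symm_3$ (the last only at $O_{\tau_0}$, in the $\symm_3$-case). Gluing in the corresponding crepant resolutions --- the $A_1$-blow-up, Proposition~\ref{3-prop:resolY}, and Theorem~\ref{thm:subthm} --- produces a crepant resolution $\widetilde X\to X$.

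The last step is the Euler characteristic. Writing $\mathrm{NF}\subset\widetilde W$ for the locus where $G'$ acts non-freely, one has $\chi(\widetilde X\setminus E)=\chi\bigl((\widetilde W\setminus\mathrm{NF})/G'\bigr)=(\sharp H-\chi(\mathrm{NF}))/\sharp G'$ by the free-quotient formula, while $\chi(E)$ is assembled from the Euler characteristics of the exceptional loci of the local resolutions ($3$ for the $\widetilde Y\to Y$ model, $2$ for the $\PP^1$ over the $A_1$-curve, $6$ for the $\AA^3_k/\symm_3$ model of Theorem~\ref{thm:subthm}), the contributions over the one-dimensional parts of the singular strata vanishing since those have Euler characteristic zero. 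When $G'\cong\ZZ/3\ZZ$, $\mathrm{NF}$ is the single fixed line $\cong\AA^1_k$ (all of $H\bsym S$ being one $H$-conjugacy class, by the proof of Lemma~\ref{lem:conjnum1}) and the local model is $Y$ throughout, whence $\chi(\widetilde X)=\tfrac13(\sharp H-1)+3=\sharp\mathrm{Conj}(G)$ by Lemma~\ref{lem:conjnum1}. When $G'\cong\symm_3$ one must in addition resolve and quotient the transposition-fixed curves lying in the boundary divisor and track the finitely many special points; the surplus over the naive McKay count concentrates, as in the case $H=1$, at the single $\symm_3$-fixed point $O_{\tau_0}$ where $X$ is locally $\AA^3_k/\symm_3$, and comparison with Lemma~\ref{lem:conjnum2} yields $\chi(\widetilde X)=\sharp\mathrm{Conj}(G)+3$. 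The two hard points are therefore (i) the local linearisation of the wild $\angles{\bsym S}$-action on $\widetilde W$ through toric charts, and (ii) the exact Euler-characteristic bookkeeping in the $\symm_3$-case.
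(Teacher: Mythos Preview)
Your overall strategy coincides with the paper's: pass to the toric quotient $W=\AA^3/H$, take a $G'$-equivariant crepant toric resolution $\widetilde W$, form $\widetilde W/G'$, and resolve using the local models of Proposition~\ref{3-prop:resolY} and Theorem~\ref{thm:subthm}. The identification of a unique $\langle\bsym S\rangle$-invariant maximal cone $\tau_0$, on whose chart $U_{\tau_0}\cong\AA^3$ the group $G'$ acts exactly as in Section~4, is precisely the paper's mechanism. The paper makes this explicit by building $\tau_0$ as the cone on the triangle $\bsym a\,\bsym a'\,\bsym a''$ with $\bsym a$ a lattice point nearest the barycentre (and, in the $\symm_3$-case, by the standard regular subdivision, which is available because Lemma~\ref{lem:aboutH} forces $H\cong(\ZZ/r\ZZ)^2$). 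Your ``hard point (i)'' therefore dissolves: no formal linearisation of a wild action is required, because in toric coordinates on $U_{\tau_0}$ the $\bsym S$-action is already the coordinate permutation.

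There is, however, a genuine gap in your Euler-characteristic argument. You invoke a ``free-quotient formula'' $\chi\bigl((\widetilde W\setminus\mathrm{NF})/G'\bigr)=\chi(\widetilde W\setminus\mathrm{NF})/\sharp G'$, but this is \emph{false} for wild free actions: the Artin--Schreier cover $\AA^1\to\AA^1$, $y\mapsto y^3-y$, is a free $\ZZ/3\ZZ$-quotient with $\chi=1$ on both sides. The paper never divides an Euler characteristic by a wild group order. Instead it decomposes $\widetilde X$ itself (not $\widetilde X\setminus E$) into locally closed pieces coming from the toric stratification: cones outside $|\tau_0|$ lie in free $\bsym S$-triples, so each triple of torus orbits contributes a single torus orbit to the quotient, while the piece over $U_{\tau_0}$ is the explicit resolution $\widetilde{\AA^3/\bsym S}$ (or $\widetilde{\AA^3/\symm_3}$). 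Since only $3$-cones have nonzero $\chi$, one gets $\chi(\widetilde X)=(\sharp\Sigma_3-1)/3+3=(\sharp H-1)/3+3$ with no multiplicativity assumption. In the $\symm_3$-case the same stratification, together with the explicit count of $\bsym T$-invariant maximal cones in the regular subdivision (there are $3r-2$ of them, yielding $r-1$ after removing $\tau_0$ and passing to $\bsym S$-orbits) and the calculation $\chi(\widetilde{O(\sigma')/\bsym T})=2$ for each, produces the $2(r-1)$ term and hence the exact formula; this is where the real content of your ``hard point (ii)'' lives.
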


We prove this theorem by constructing a crepant resolution along the following strategy. We set \(Y=\AA^3/H\). We devise the following diagram:
\[\xymatrix{
&&\widetilde X\ar[d]\\
&\widetilde Y\ar[r]\ar[d]&X^\prime\ar[d]\\
\AA^3\ar[r]&Y\ar[r]&X
}\]
As \(H\) is a tame Abelian, we can construct \(Y\) as a toric variety. We formulate a toric crepant resolution \(\widetilde Y\to Y\) such that the \(G^\prime\)-action on \(Y\) extends on \(\widetilde Y\). We set \(X^\prime=\widetilde Y/G^\prime\). We show that \(X^\prime\) has a crepant resolution \(\widetilde X\to X^\prime\). The composition \(\widetilde X\to X^\prime\to X\) is the desired crepant resolution of \(X\).

First, we illustrate the construction of \(Y\) as a toric variety. Let \(r\) be the maximal order of the elements of \(H\). Fix a \(r\)-th primitive root \(\zeta_r\in k\). As all element of \(H\) are diagonal matrices, and its order is a divisor of \(r\), any \(\bsym X\in H\backslash\{I\}\) is written as \(\diag(\zeta_r^a,\zeta_r^b,\zeta_r^c)\) for some integers \(a,b,c\) satisfying \(0\leq a,b,c<r\). We define a map
\[pt\colon H\ni\diag(\zeta_r^a,\zeta_r^b,\zeta_r^c)\to\frac 1r[a,b,c]\in\RR.\]
Let \(\Gamma\) be the lattice generated by \(\ZZ^3\) and \(pt(H)\). Then, the quotient variety \(Y\) arises as the toric variety defined by the lattice \(\Gamma\) and the cone \(\RR^3_{\geq 0}\). 

We define \(Tr\colon\RR^3\to\RR\) by the sum of its entries. aS \(H\in\SL_3(k)\), \(Tr(pt(H))\subset\ZZ\). Hence, \(Tr(\Gamma)\subset\ZZ\).
Let \(\Delta_1\) be the plane in \(\RR^3\) defined by \(x+y+z=1\). The following lemma gives a way to construct a crepant resolution of \(Y\).
\begin{lemm}\label{lem:cirtSing}
Let \(\bsym p_1,\bsym p_2,\bsym p_3\) be the points in \(\Gamma\cap\Delta_1\). Suppose no points of \(\Gamma\) are in the triangle \(\bsym p_1,\bsym p_2,\bsym p_3\) except its vertices. Then \(\bsym p_1,\bsym p_2,\bsym p_3\) is a basis of \(\Gamma\).
\end{lemm}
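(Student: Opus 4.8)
The plan is to reduce this three-dimensional statement to the classical fact that an \emph{empty} lattice triangle in a plane is unimodular. First I would record that the trace map $Tr\colon\Gamma\to\ZZ$ is surjective: indeed $(1,0,0)\in\ZZ^3\subseteq\Gamma$ has trace $1$, while $Tr(\Gamma)\subseteq\ZZ$ has already been observed. Hence its kernel $\Gamma_0=\Gamma\cap\{x+y+z=0\}$ is a rank-two lattice and, using the splitting furnished by $Tr(\bsym p_1)=1$, one has $\Gamma=\Gamma_0\oplus\ZZ\bsym p_1$. Since each $\bsym p_i$ lies on $\Delta_1$, the differences $\bsym v_1:=\bsym p_2-\bsym p_1$ and $\bsym v_2:=\bsym p_3-\bsym p_1$ lie in $\Gamma_0$, and $\Gamma\cap\Delta_1=\bsym p_1+\Gamma_0$. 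A one-line determinant computation (the change of generators $(\bsym p_1,\bsym v_1,\bsym v_2)\mapsto(\bsym p_1,\bsym p_1+\bsym v_1,\bsym p_1+\bsym v_2)$ is unimodular) then shows that $\{\bsym p_1,\bsym p_2,\bsym p_3\}$ is a $\ZZ$-basis of $\Gamma$ if and only if $\{\bsym v_1,\bsym v_2\}$ is a $\ZZ$-basis of $\Gamma_0$, so the problem becomes purely two-dimensional.

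Next I would restate the hypothesis in these terms. Reading ``the triangle $\bsym p_1,\bsym p_2,\bsym p_3$'' as a genuine (non-degenerate) triangle, the vectors $\bsym v_1,\bsym v_2$ are linearly independent and the closed triangle equals $\{\bsym p_1+s\bsym v_1+t\bsym v_2:s,t\ge 0,\ s+t\le 1\}$. Since a point of $\Gamma$ lies on $\Delta_1$ exactly when it differs from $\bsym p_1$ by an element of $\Gamma_0$, the emptiness assumption says: the only $\bsym w\in\Gamma_0$ of the form $s\bsym v_1+t\bsym v_2$ with $s,t\ge 0$ and $s+t\le 1$ are $\bsym 0,\bsym v_1,\bsym v_2$.

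Then I would run the standard folding argument. Suppose $L':=\ZZ\bsym v_1+\ZZ\bsym v_2$ were a proper, hence finite-index, sublattice of $\Gamma_0$; then the half-open fundamental parallelogram $\{s\bsym v_1+t\bsym v_2:0\le s,t<1\}$ would contain some $\bsym w=s\bsym v_1+t\bsym v_2\in\Gamma_0$ with $(s,t)\ne(0,0)$. If $s+t\le 1$, then $\bsym p_1+\bsym w\in\Gamma$ lies in the triangle and is none of the three vertices (not $\bsym p_1$ as $\bsym w\ne\bsym 0$, not $\bsym p_2$ as $s<1$, not $\bsym p_3$ as $t<1$), contradicting the hypothesis. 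If $s+t>1$, then necessarily $s,t>0$ and $\bsym w':=(\bsym v_1+\bsym v_2)-\bsym w=(1-s)\bsym v_1+(1-t)\bsym v_2\in\Gamma_0$ satisfies $0<1-s,1-t<1$ and $(1-s)+(1-t)<1$, so $\bsym p_1+\bsym w'$ is a non-vertex point of $\Gamma$ in the interior of the triangle, again a contradiction. Hence $L'=\Gamma_0$, i.e.\ $\{\bsym v_1,\bsym v_2\}$ is a basis of $\Gamma_0$, and by the first paragraph $\{\bsym p_1,\bsym p_2,\bsym p_3\}$ is a basis of $\Gamma$.

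The only genuine subtlety I anticipate is the degeneracy point in the second paragraph: if the $\bsym p_i$ were allowed to be collinear, three consecutive lattice points on a line would satisfy the ``empty triangle'' condition yet fail to be a basis, so affine independence must be (and tacitly is) part of the hypothesis and should be stated explicitly. The remaining ingredients — surjectivity and splitting of $Tr$, the fundamental-domain count $\sharp(\Gamma_0/L')=[\Gamma_0:L']$, and the folding computation — are all routine.
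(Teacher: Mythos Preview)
Your proof is correct and follows essentially the same strategy as the paper's: take a lattice point outside the sublattice generated by the $\bsym p_i$, reduce its coefficients into $[0,1)$, use the trace constraint to force the coefficient sum to be $1$ or $2$, and apply the reflection $\bsym x\mapsto \bsym p_1+\bsym p_2+\bsym p_3-\bsym x$ (your $\bsym w\mapsto \bsym v_1+\bsym v_2-\bsym w$) in the latter case to land inside the triangle. The only difference is packaging---you first split off $Tr$ to make the problem explicitly two-dimensional, whereas the paper works directly with the three barycentric-type coefficients; your remark on the tacit non-degeneracy assumption is a fair observation that the paper leaves implicit.
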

\begin{proof}
As \(\bsym p_1,\bsym p_2,\bsym p_3\) is a basis of \(\RR^3\) as a \(\RR\)-vector space, we only have to show that \(\bsym p_1,\bsym p_2,\bsym p_3\) generate \(\Gamma\). Let \(\Gamma_0\) be the lattice generated by \(\bsym p_1,\bsym p_2,\bsym p_3\). To show \(\Gamma=\Gamma_0\), we suppose that \(\Gamma\ne\Gamma_0\), on the contrary. Evidently, \(\Gamma_0\subset\Gamma\). Hence, there exists \(\bsym x=[x_1,x_2,x_3]\in\Gamma\backslash\Gamma_0\). As \(\bsym p_1,\bsym p_2,\bsym p_3\) is a \(\RR\)-basis of \(\RR^3\), we can write
\[
\bsym x=\sum_{i=1}^3c_i\bsym p_i
\]
by some \(c_i\in\RR\). As
\[
\bsym x-\sum_{i=1}^3\floor{c_i}\bsym p_i=\sum_{i=1}^3(c_i-\floor{c_i})\bsym p_i\in\Gamma\backslash\Gamma_0,
\]
we may assume that \(0\leq c_i<1\) for \(i=1,2,3\). As \(\bsym p_1,\bsym p_2,\bsym p_3\in\Delta_1\) and \(Tr\) is \(\RR\)-linear, we obtain
\[
\sum_{i=1}^3c_i=\sum_{i=1}^3c_iTr(\bsym p_i)=Tr(\bsym x)\in\ZZ.
\]
Thus
\[
\sum_{i=1}^3c_i=1\text{ or }2.
\]
Replacing \(\bsym x\) by
\[
\sum_{i=1}^3\bsym p_i-\bsym x=\sum_{i=1}^3(1-c_i)\bsym p_i\in\Gamma\backslash\Gamma_0
\]
if necessary, we may assume that \(0\leq c_i\leq 1\) and \(c_1+c_2+c_3=1\). This implies \(\bsym x\) is a point of \(\Gamma\) in the triangle \(\bsym p_1\bsym p_2\bsym p_3\). By assumption, \(\bsym x\) is one of the vertices. It contradicts that \(\bsym x\not\in\Gamma_0\).
\end{proof}

Lemma \ref{lem:cirtSing} implies that to construct a crepant resolution of \(Y\), we have to only provide a subdivision of the triangle \(T=\RR^3_{\geq 0}\cap\Delta_1\) into triangles whose vertices are exactly all the points of \(\Gamma\cap T\).

\subsection{The case \(G^\prime\cong\ZZ/3\ZZ\)}
First, we consider the case that \(G\) is a semidirect product of a tame Abelian group \(H\) and a cyclic group \(\ZZ/3\ZZ\). We may assume that each element of \(H\) is diagonal and \(G\) generated by \(H\) and
\[
\bsym S=\begin{bmatrix}
0&1&0\\
0&0&1\\
1&0&0
\end{bmatrix}
\]
from Lemma \ref{lem:gr1}. Thus \(G^\prime=\angles{\bsym S}\). The \(\bsym S\)-action on \(Y\) is given by the toric automorphism corresponding to the canonical \(\bsym S\)-action on \(\RR^3\). 
\begin{figure}
    \centering
    \includegraphics[width=5cm,pagebox=cropbox]{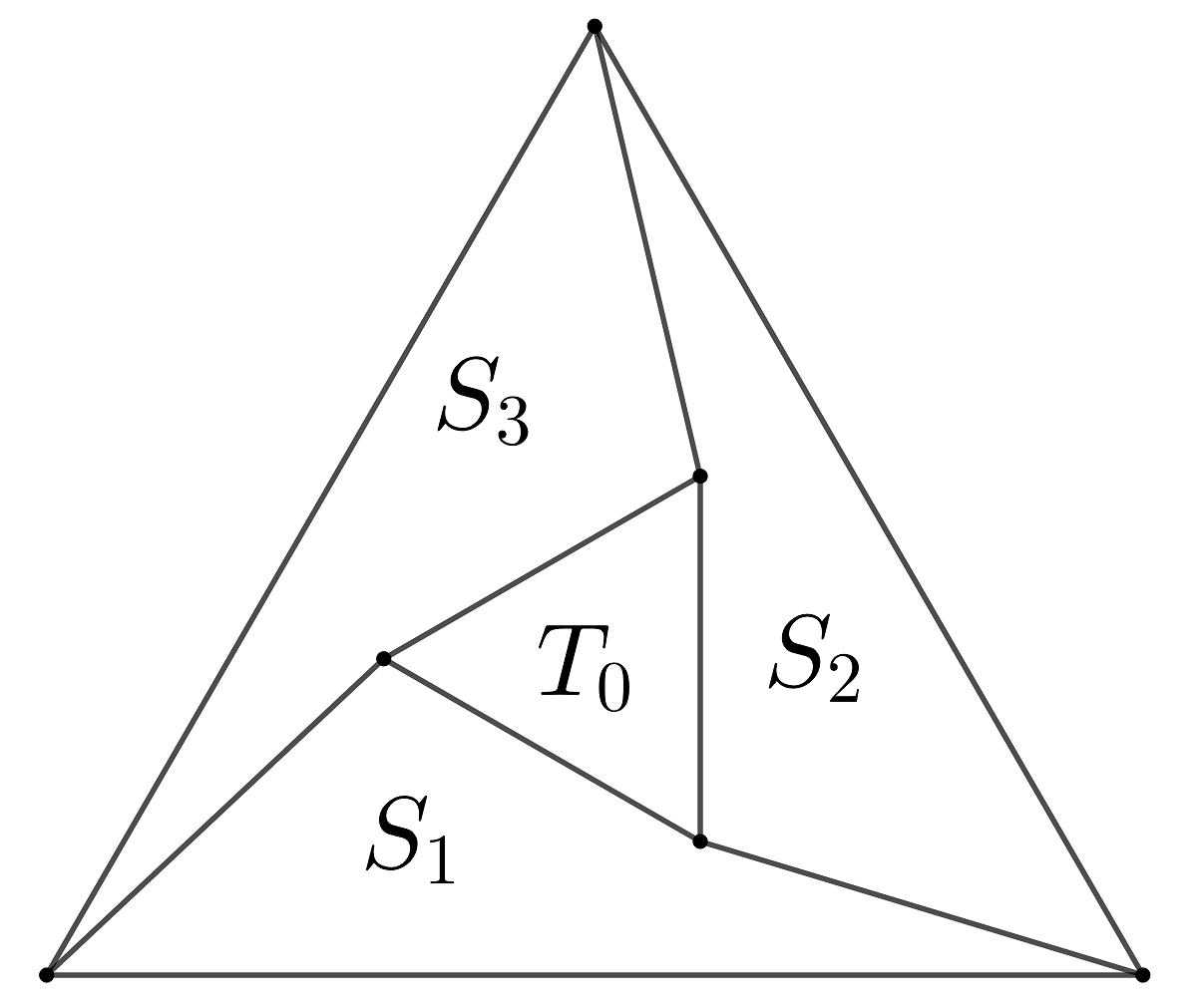}
    \caption{subdivision of \(T\)}
    \label{fig:my_label}
\end{figure}
To define a crepant resolution of \(Y\), we provide a subdivision of \(T\). Take a point \(\bsym a\in H\cap\Delta_1\) such that its distance from the center \([\frac 13,\frac 13,\frac 13]\) of \(T\) is minimal among all points in \(\Gamma\cap\Delta_1\). Note that \([\frac 13,\frac 13,\frac 13]\not\in\Gamma\). Let \(\bsym a^\prime=\bsym a\cdot\bsym S,\ \bsym a^\dprime=\bsym a\cdot\bsym S^2\), and let \(T_0\) be the triangle \(\bsym a\bsym a^\prime\bsym a^\dprime\). Evidently, \(T_0\) is stable for the \(\bsym S\)-action. We may assume that \(\bsym a\) in \(\{[x,y,z]\in\RR^3\mid x\leq y,x\leq z\}\). We denote the square \(\bsym a\bsym a^\prime\bsym e_x\bsym e_z\) by \(S_1\) where \(\bsym e_x=[1,0,0], \bsym e_z=[0,0,1]\). By putting \(S_2=S_1\cdot\bsym S,\ S_3=S_1\cdot\bsym S^2\), the polygons \(T_0,S_1,S_2,S_3\) present a subdivision of \(T\) (Figure \ref{fig:my_label}). Subdivide \(S_1\) into triangles whose vertices are exactly all the points of \(\Gamma\cap S_1\). The subdivision of \(S_1\) persents a subdivisions of \(S_2,S_3\) by the action of \(\bsym S\). Thus, we obtain a \(\bsym S\)-stable subdivision of \(T\) into triangles whose vertices are exactly \(\Gamma\cap T\).

We denote the fan given by the above subdivision of \(T\) by \(\Sigma\) and the toric variety corresponding to \(\Sigma\) by \(\widetilde Y\). From Lemma \ref{lem:cirtSing}, \(\widetilde Y\) is nonsingular. As all the rays of \(\Sigma\) have the primitive point in \(\Delta_1\), the resolution \(\widetilde Y\to Y\) is crepant. Moreover, the \(\bsym S\)-action on \(Y\) lifts on \(\widetilde Y\) because \(\Sigma\) is \(\bsym S\)-stable.

Next we present a crepant resolution of \(\widetilde Y/\bsym S\). We denote the orbit of the torus action on \(\widetilde Y\) corresponding to a cone \(\sigma\in\Sigma\) by \(O(\sigma)\). Then,
\[
\widetilde Y/\bsym S=O(\bsym o)/\bsym S\sqcup O(\tau)/\bsym S\sqcup\left(\bigsqcup_{\sigma\in (\Sigma-\{\bsym o,\tau\})/\bsym S}O(\sigma)\right)
\]
Where, \(\bsym o\) is the origin of \(\RR^3\), \(\tau\) is the 3-dimensional cone corresponding to \(T_0\), and \((\Sigma-\{\bsym o,\tau\})/\bsym S\) is the set of the representatives of the \(\bsym S\)-orbits in \(\Sigma-\{\bsym o,\tau\}\). Hence, the singular points in \(\widetilde Y/\bsym S\) are contained in \(O(\bsym o)/\bsym S\sqcup O(\tau)/\bsym S\), or the quotient \(U/\bsym S\) of the open affine subvariety \(U\cong\AA^3\) of \(Y\) corresponding the cone \(\tau\). From Proposition \ref{3-prop:resolY}, \(U/\bsym S\) has a crepant resolution. Therefore \(\widetilde Y/\bsym S\) has a crepant resolution \(\widetilde X\to\widetilde Y/\bsym S\).

Using this construction, we prove a part of the main theorem.
\begin{thm}
Let \(G\) be a small subgroup of \(\SL_3(k)\) isomorphic to a semi-product \(H\rtimes\ZZ/3\ZZ\) where \(H\) is a tame Abelian group. When \(G\) acts on \(\AA^3_k\) canonically, the quotient variety \(X=\AA^3_k/G\) has a crepant resolution \(\widetilde X\to X\), and the Euler characteristic \(\chi(\widetilde X)\) is equal to the number of conjugacy classes \(\mathrm{Conj}(G)\) of \(G\). 
\end{thm}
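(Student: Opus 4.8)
The existence of a crepant resolution is already essentially in hand from the construction carried out just above: the toric crepant resolution $f\colon\widetilde Y\to Y$ of $Y=\AA^3_k/H$ carries the lifted automorphism $\bsym S$, the quotient $X^\prime=\widetilde Y/\angles{\bsym S}$ admits the crepant resolution $\psi\colon\widetilde X\to X^\prime$ coming from Proposition \ref{3-prop:resolY}, and $\widetilde X\to X^\prime\to X$ is crepant; so the plan is to establish the numerical identity. I would use throughout that the $l$-adic Euler characteristic $\chi$ is additive over locally closed decompositions, and that for the free action of $\angles{\bsym S}$ (a constant, hence \'etale, group scheme of order $3$, with $3\ne l$) on a variety $V$ one has $\chi(V/\angles{\bsym S})=\tfrac13\chi(V)$ -- which follows from $H^\ast_{et,c}(V/\angles{\bsym S})\cong H^\ast_{et,c}(V)^{\angles{\bsym S}}$ together with the Grothendieck--Lefschetz trace formula.

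First I would compute $\chi(\widetilde Y)$. As $\widetilde Y$ is a smooth toric variety it is stratified by its torus orbits $O(\sigma)\cong\mathbb{G}_m^{3-\dim\sigma}$, so $\chi(\widetilde Y)$ equals the number of three-dimensional cones of the fan, that is, the number of triangles in the chosen subdivision of $T$. By Lemma \ref{lem:cirtSing} each such triangle is unimodular for the affine lattice $\Gamma\cap\Delta_1$, so this number is the normalized area of $T$ for that lattice, which is the index of $\ZZ^3\cap\{Tr=0\}$ in $\Gamma\cap\{Tr=0\}$; since $Tr$ splits off a common summand $\ZZ\bsym e_x$ this index equals $[\Gamma:\ZZ^3]$, and the homomorphism $pt$ identifies $\Gamma/\ZZ^3$ with $H$. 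Hence $\chi(\widetilde Y)=\sharp H$.

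Next I would identify the fixed locus $F=\widetilde Y^{\angles{\bsym S}}$ and dispose of the part of $\widetilde X$ over the smooth locus of $X^\prime$. Since the fan is $\bsym S$-stable and $\bsym S$ rotates the subdivision, the only $\bsym S$-fixed cones are the origin and the central cone $\tau$; as $O(\bsym o)$ and $O(\tau)$ both lie in the affine chart $U_\tau\cong\AA^3_k$ on which $\bsym S$ acts by the coordinate $3$-cycle, $F=(\AA^3_k)^{\angles{\bsym S}}$ is the diagonal line and $\chi(F)=1$. As $\angles{\bsym S}$ has prime order it acts freely on $\widetilde Y\setminus F$, and $(\widetilde Y\setminus F)/\angles{\bsym S}$ is exactly the smooth locus of $X^\prime$, over which $\psi$ is an isomorphism; writing $E$ for the exceptional set of $\psi$, this gives $\chi(\widetilde X\setminus E)=\tfrac13\big(\chi(\widetilde Y)-\chi(F)\big)=\tfrac13(\sharp H-1)$.

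It then remains to compute $\chi(E)$. The singular locus of $X^\prime$ is $\pi(F)$, and it lies entirely in the chart $U_\tau/\angles{\bsym S}=\AA^3_k/\angles{\bsym S}=:Z$, which is precisely the cyclic quotient resolved by $\widetilde Z\to Z$ in Proposition \ref{3-prop:resolY}; hence $\psi$ coincides with $\widetilde Z\to Z$ near $\pi(F)$ and $E$ is the exceptional set of that resolution. Since $\widetilde Z\setminus E\cong Z\setminus\Sing Z$ and $\angles{\bsym S}$ acts freely off its fixed line, $\chi(Z\setminus\Sing Z)=\tfrac13(\chi(\AA^3_k)-\chi(\AA^1_k))=0$, so $\chi(E)=\chi(\widetilde Z)$, and I claim $\chi(\widetilde Z)=3$: this is read off by evaluating $\chi$ on the affine charts $\widetilde Z=T_1\cup T_2\cup U_2$ of Proposition \ref{3-prop:resolY}, where one finds $\chi(T_1)=\chi(T_2\setminus T_1)=\chi\big(U_2\setminus(T_1\cup T_2)\big)=1$ (it is also the value furnished by Yasuda's theorem \cite{Yas} for the wild $\ZZ/3\ZZ$-quotient in dimension three). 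Adding the two contributions,
\[
\chi(\widetilde X)=\chi(\widetilde X\setminus E)+\chi(E)=\frac{\sharp H-1}{3}+3=\sharp\mathrm{Conj}(G)
\]
by Lemma \ref{lem:conjnum1}, which is the assertion. The real work is the last equality $\chi(\widetilde Z)=3$: it requires tracing the two blow-ups of Proposition \ref{3-prop:resolY} carefully enough to see which portion of each exceptional divisor lies in which of $T_1,T_2,U_2$, and it is there that the positive characteristic intervenes, $\bsym S$ being a single unipotent Jordan block rather than a diagonalizable matrix. The remaining steps are bookkeeping with the toric dictionary and with the additive and multiplicative behaviour of $\chi$.
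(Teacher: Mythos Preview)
Your argument is correct and follows essentially the same route as the paper: both reduce to the toric count $\chi(\widetilde Y)=\sharp H$, isolate the central $\bsym S$-stable chart $U_\tau\cong\AA^3_k$, feed it into Proposition~\ref{3-prop:resolY}, and invoke Lemma~\ref{lem:conjnum1}. The only cosmetic differences are that the paper partitions $\widetilde X$ in the Grothendieck ring as (torus orbits outside $|\tau|$) $\sqcup$ (full resolution of $U_\tau/\bsym S$) rather than your (smooth locus of $X'$) $\sqcup$ (exceptional set $E$), and that the paper simply cites \cite[Corollary~6.21]{Yas} for $\chi(\widetilde{\AA^3/\bsym S})=3$ whereas you extract it from the charts $T_1,T_2,U_2$ directly.
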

\begin{proof}
A crepant resolution of \(X\) is given above. Hence we have only to prove the equality of the Euler characteristic and the number of conjugacy classes. As \(H\) is tame and Abelian, the Euler characteristic \(\chi(\widetilde Y)\) of \(\widetilde Y\) is \(\sharp H\). Using the decomposition of \(\widetilde Y\) into torus orbits, we obtain
\[
[\widetilde Y]=\sum_{\sigma\in\Sigma}[O(\sigma)]
\]
in the Grothendiek ring of varieties. As the Euler characteristic provides an additive map from the Grothendiek ring to \(\ZZ\), we obtain
\[
\sum_{\sigma\in\Sigma}\chi(O(\sigma))=\chi(\widetilde Y)=\sharp H.
\]
If \(\dim(\sigma)=d\), then \(O(\sigma)\cong\mathbb T^{3-d}\) where \(\mathbb T^n\) is the \(n\)-dimensional torus, and hence we obtain 
\[\chi(O(\sigma))=\begin{cases}
0&(\dim(\sigma)\leq 2)\\
1&(\dim(\sigma)=3)
\end{cases}\]
Let \(\Sigma_3\) be the set of cones of dimension three in \(\Sigma\). we obtain
\[
\sharp\Sigma_3=\sum_{\sigma\in\Sigma_3}\chi(O(\sigma))=\sharp H.
\]

Meanwhile, \([\widetilde X]\) is decomposed as
\[
[\widetilde X]=\sum_{\sigma\in(\Sigma-|\tau|)/\bsym S}[O(\sigma)]+[\widetilde{\AA^3/\bsym S}]
\]
where \(|\tau|\) is the set of the faces of cone \(\tau\), \((\Sigma-|\tau|)/\bsym S\) is the set of the representatives of the \(\bsym S\)-orbits on \(\Sigma-|\tau|\), and \(\widetilde{\AA^3/\bsym S}\) is a crepant resolution of \(\AA^3/\bsym S\). Then, we have
\begin{align*}
\chi(\widetilde X)&=\sum_{\sigma\in(\Sigma_3-\{\tau\})/\bsym S}\chi(O(\sigma))+\chi(\widetilde{\AA^3/\bsym S})\\
&=\sharp((\Sigma_3-\{\tau\})/\bsym S)+\chi(\widetilde{\AA^3/\bsym S})\\
&=\frac{\sharp H-1}3+\chi(\widetilde{\AA^3/\bsym S}).
\end{align*}
By \cite[Corollary 6.21]{Yas}, \(\chi(\widetilde{\AA^3/\bsym S})=3\). Therefore,
\[
\chi(\widetilde X)=\frac{\sharp H-1}3+3=\sharp\mathrm{Conj}(G)
\]
from Lemma \ref{lem:conjnum1}.
\end{proof}

\subsection{The case \(G^\prime\cong\symm_3\)}
We consider the case that \(G\) is a semi-product of a tame Abelian group \(H\) and a symmetric group \(\symm_3\). We may assume that each element of \(H\) is diagonal and \(G\) is generated by \(H\),
\[
\bsym S=\begin{bmatrix}
0&1&0\\
0&0&1\\
1&0&0
\end{bmatrix},\text{ and }
\bsym T=\begin{bmatrix}
0&0&-1\\
0&-1&0\\
-1&0&0
\end{bmatrix}
\]
from Lemma \ref{lem:gr2}. Thus \(G^\prime=\angles{\bsym S,\bsym T}\). We define a \(G^\prime\)-action on \(\RR^3\) by
\begin{gather*}
[x,y,z]\cdot\bsym S=[z,x,y],\\
[x,y,z]\cdot\bsym T=[z,y,x].
\end{gather*}
Note that the \(\bsym T\)-action on \(\AA^3\) corresponds to the composition of the toric automorphism defined by the \(\bsym T\)-action on \(\RR^3\) and the action of \([-1,-1,-1]\in\bsym T^3\).

\begin{figure}
    \centering
    \includegraphics[width=5cm,pagebox=cropbox]{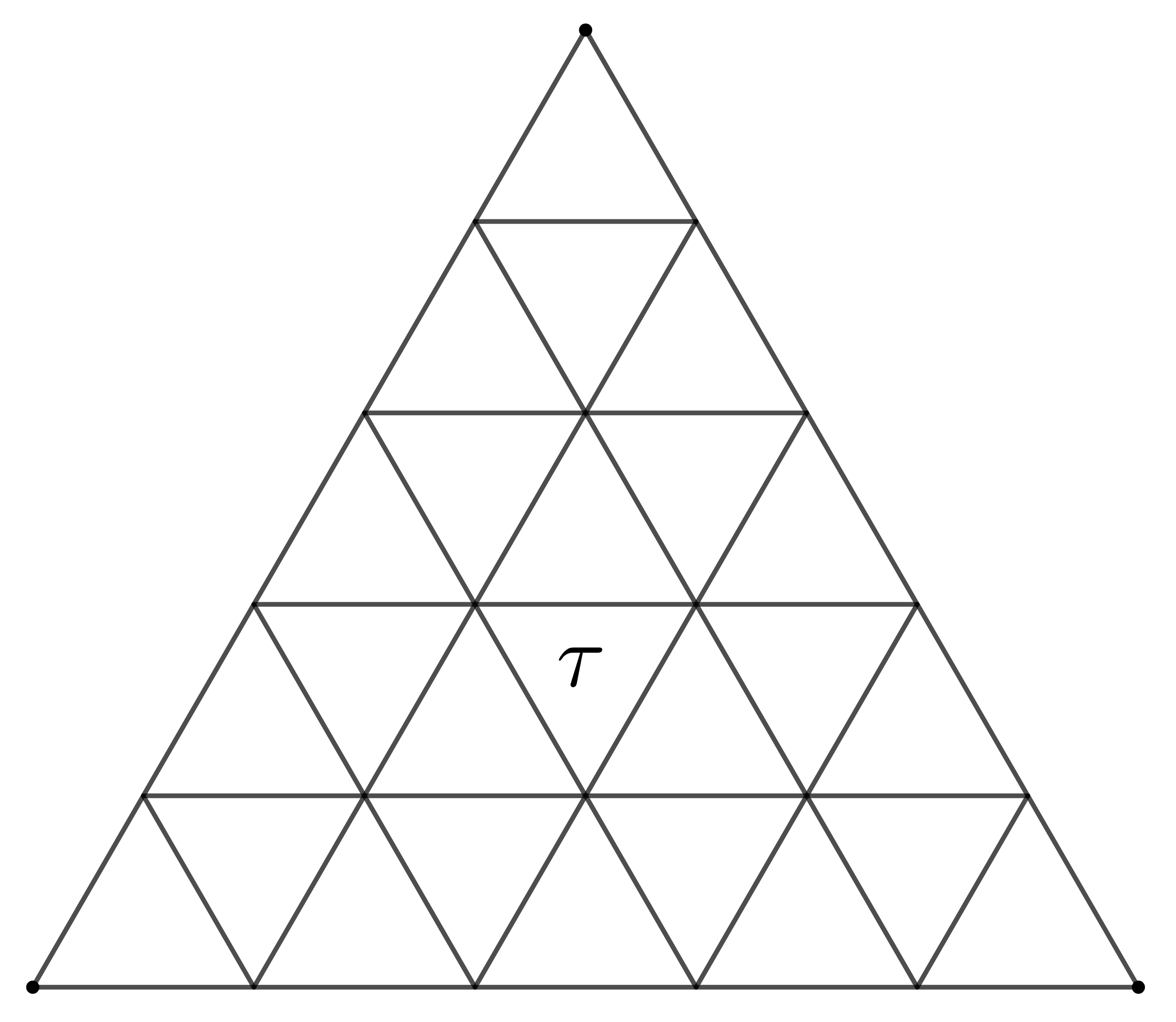}
    \caption{subdivision of \(T\) where \(r=5\)}
    \label{fig:my_label2}
\end{figure}

We provide a \(G^\prime\)-stable subdivision of \(T\). From Lemma \ref{lem:aboutH}, we obtain
\[
\bsym T\cap\Gamma=\left\{\frac 1r[x,y,z]\middle|x,y,z\in\ZZ,\ 0\leq x,y,z\leq r,x+y+z=r\right\}.
\]
Hence, the subdivision of \(T\) with \(3(r-1)\) lines
\[
x=\frac ir,\ y=\frac ir,\ z=\frac ir\quad(1\leq i<r)
\]
in \(\Delta_1\) consists triangles whose vertices are exactly the points of \(\Gamma\cap T\) (Figure \ref{fig:my_label2}). Evidently, the subdivision is \(G^\prime\)-stable. Let \(\Sigma\) be the fan corresponding to the subdivision of \(T\), and let \(\widetilde Y\) be the toric variety defined by the lattice \(\Gamma\) and the fan \(\Sigma\). Then \(\widetilde Y\to Y\) is a crepant resolution of \(Y\) as in section 4.1. 

We construct a crepant resolution of \(\widetilde Y/G^\prime\). Now \(G^\prime\) is the group \(\angles{\bsym S,\bsym T}\). We can construct a crepant resolution \(Y^\prime\to\widetilde Y/\bsym S\) as in section 4.1. Hence, we have to construct a crepant resolution of \(Y^\prime/\bsym T\). By construction, \(Y^\prime\) has the form
\[
Y^\prime=\bigsqcup_{\sigma\in(\Sigma-|\tau|)/\bsym S}O(\sigma)\sqcup\widetilde{\AA^3/\bsym S}
\]
Where, we use the same notation as in the proof of Theorem 4.3. Let \(\Sigma^\prime\) be the set of the cones whose \(G^\prime\)-orbit has exactly three elements. Then
\[
Y^\prime=\bigsqcup_{\sigma\in(\Sigma-\Sigma^\prime)/G^\prime}O(\sigma)\sqcup\bigsqcup_{\sigma^\prime\in(\Sigma^\prime-|\tau|)/\bsym S}O(\sigma^\prime)/\bsym T\sqcup(\widetilde{\AA^3/\bsym S})/\bsym T
\]
where \((\Sigma^\prime-|\tau|)/\bsym S\) is the set of representatives which stable with respect to the action of \(\bsym T\).
Any cone \(\sigma^\prime\) in \((\Sigma^\prime-|\tau|)/\bsym S\) is a face of a 3-dimensional cone \(c\in(\Sigma^\prime-|\tau|)/\bsym S\). In this case, \(O(\sigma^\prime)\subset U_c\) where \(U_c\) is the open affine subvariety of \(Y\) respect to \(c\). If we write \(U_c=\Spec k[s,t,u]\), the \(\bsym T\)-action on \(U_c\) corresponds to the ring homomorphism defined by
\[s\mapsto -t,\ t\mapsto -s,\ u\mapsto -u.\]
Hence, the singular locus in \(U_c/\bsym T\) is 1-dimensional, and hence the one of \(Y^\prime/\bsym T\) is so. Therefore, we can obtain a crepant resolution of \(Y^\prime/\bsym T\) by blow-up along its singular locus as in the proof of Theorem \ref{thm:subthm}.

From the argumentabove, we get the remaining part of the main theorem.
\begin{thm}\label{thm:main2}
Let \(G\) be a subgroup of \(\SL_3(k)\) isomorphic to \(H\rtimes\symm_3\) where \(H\) is a tame Abelian group. Suppose \(G\) acts on \(\AA^3_k\) canonically. Then the quotient variety \(X=\AA^3/G\) has a crepant resolution \(\widetilde X\to X\). Moreover, the following equality holds.
\[
\chi(\widetilde X)=\sharp\mathrm{Conj}(G)+3
\]
\end{thm}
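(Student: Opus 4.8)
The crepant resolution $\widetilde X\to X$ has already been built above (first the toric crepant resolution $\widetilde Y\to Y=\AA^3_k/H$, then $X'=\widetilde Y/G'$, then a blow-up $\widetilde X\to X'$), so the real content of the theorem is the Euler-characteristic identity. My plan is to compute $\chi(\widetilde X)$ by a cut-and-paste argument in the Grothendieck ring of varieties, feeding in the two cases already settled (Theorem \ref{thm:subthm} for $\symm_3$ with $H$ trivial, and the count $\chi(\widetilde Y)=\sharp\Sigma_3$ from the $G'\cong\ZZ/3\ZZ$ case) and the conjugacy-class count of Lemma \ref{lem:conjnum2}. The key structural point is that the open toric chart $U_\tau\cong\AA^3_k$ of $\widetilde Y$ attached to the central cone $\tau$ is $G'$-stable, and carries exactly the $\symm_3$-action of Section 4 with $H$ trivial: $\bsym S$ acts by cyclic permutation of the lattice basis and $\bsym T$ by the transposition twisted by $[-1,-1,-1]$. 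Writing $\psi\colon\widetilde X\to\widetilde Y/G'$ for the resolution, $\widetilde X$ thus splits as the open piece $\psi^{-1}(U_\tau/G')$ together with the closed piece $\psi^{-1}\bigl((\widetilde Y\setminus U_\tau)/G'\bigr)$, and $\psi^{-1}(U_\tau/G')$ is precisely the crepant resolution of Theorem \ref{thm:subthm}, hence contributes $6$ to $\chi(\widetilde X)$.

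For the complementary piece I would argue as follows. On $\widetilde Y\setminus U_\tau$ no point has $\bsym S$ in its $G'$-stabilizer, since the only $\bsym S$-invariant cones of the subdivision $\Sigma$ are $\bsym o$ and $\tau$; so the only nontrivial stabilizers there are the three order-two subgroups generated by transpositions, and a point fixed by two of them is fixed by all of $G'$, hence lies in $U_\tau$. Therefore the three transposition-fixed loci outside $U_\tau$ are pairwise disjoint and cyclically permuted by $\bsym S$: putting $F=\widetilde Y^{\,\bsym T}\setminus U_\tau$, the full bad locus $F'\subset\widetilde Y\setminus U_\tau$ is $F\sqcup\bsym SF\bsym S^{-1}\sqcup\bsym S^{2}F\bsym S^{-2}$, with $F'/G'\cong F$ and $\chi(F')=3\chi(F)$. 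Off $F'$ the action of $G'$ on $\widetilde Y\setminus U_\tau$ is free, and $(\widetilde Y\setminus U_\tau)/G'$ has transverse $A_1$-singularities along $F'/G'$ (every nontrivial involution in $\SL_3(k)$ acts on a tangent space with eigenvalues $1,-1,-1$, so fixed loci are smooth curves), resolved by one blow-up whose exceptional divisor $E_0\to F'/G'$ is a $\PP^1$-bundle. Using $\chi(\widetilde Y)=\sharp\Sigma_3=r^2=\sharp H$ (the toric orbit count together with Lemma \ref{lem:aboutH}) and $\chi(U_\tau)=\chi(\AA^3)=1$, additivity of $\chi$ gives
\[
\chi\bigl(\psi^{-1}((\widetilde Y\setminus U_\tau)/G')\bigr)=\tfrac16\bigl(\chi(\widetilde Y)-\chi(U_\tau)-\chi(F')\bigr)+\chi(E_0)=\tfrac{r^{2}-1}{6}+\tfrac32\chi(F).
\]

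It remains to compute $\chi(F)$, where $F=\widetilde Y^{\,\bsym T}\setminus U_\tau$ is a disjoint union of smooth curves. Here I would enumerate the $\bsym T$-invariant cones of $\Sigma$ for the reflection $[x,y,z]\mapsto[z,y,x]$ (one one-parameter family of invariant rays along the symmetry axis, the invariant edges in the $e_1-e_3$ direction, and the invariant triangles straddling the axis), compute on each corresponding torus orbit $O(\sigma)$ the fixed locus of the reflection composed with translation by $[-1,-1,-1]$ — in particular this twist kills the fixed locus on the big torus $O(\bsym o)$ and forces the eigenvalue pattern $1,-1,-1$ everywhere — and then glue the resulting pieces to identify $F$. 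The count should yield $\chi(F)=r-1$ (consistent with the case $r=1$, where $\widetilde Y=U_\tau$ and $F=\varnothing$). Assembling the three contributions,
\[
\chi(\widetilde X)=6+\tfrac{r^{2}-1}{6}+\tfrac32(r-1)=\tfrac{(r-1)(r-2)}{6}+2r+4=\sharp\mathrm{Conj}(G)+3
\]
by Lemma \ref{lem:conjnum2}, which proves the theorem (and hence completes the proof of Theorem \ref{main}).

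The main obstacle is this last step: keeping careful track of the $[-1,-1,-1]$ twist while running through the several $r$-dependent families of $\bsym T$-invariant cones and their orbits, so as to determine the global structure of the fixed curve $F$ — which components are $\PP^1$'s, which are $\AA^1$'s, and which invariant orbits in fact carry empty fixed locus — and thereby to verify $\chi(F)=r-1$. Everything else is bookkeeping with the cut-and-paste relation and the already-established special cases.
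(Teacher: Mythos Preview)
Your proposal is correct and follows essentially the same strategy as the paper: split off the central chart $U_\tau$ and invoke Theorem~\ref{thm:subthm} for a contribution of $6$, then handle the peripheral part via the toric $\bsym T$-invariant cone analysis. The only organizational difference is that the paper computes the peripheral contribution cone-by-cone, picking one representative orbit $O(\sigma)$ per $G'$-orbit of cones (so no division by $|G'|$ is ever needed), whereas you package the same computation as $\chi(\text{free locus})/6$ plus a $\PP^1$-bundle over the fixed curve $F$; the free-quotient identity $\chi_c(W/G')=\chi_c(W)/|G'|$ you use is valid here by the Deligne--Lusztig Lefschetz fixed-point formula, but the paper's route sidesteps it entirely.

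Regarding your flagged obstacle $\chi(F)=r-1$: the paper's cone-by-cone analysis gives this immediately. There are exactly $r$ cones in $\Sigma_3$ fixed by the reflection $x\leftrightarrow z$ (this is the count $\sharp\Sigma'_3=3r-2$ together with the observation that the three reflections share only $\tau$), hence $r-1$ outside $\{\tau\}$; on each such $O(\sigma')$ the twisted action $s\mapsto -t,\ t\mapsto -s,\ u\mapsto -u$ fixes only the origin, contributing $1$. For the $\bsym T$-invariant $2$-cones (edges with swapped endpoints) the action on the one-dimensional orbit is $u\mapsto -u$, hence fixed-point free; for the $\bsym T$-invariant rays (through the axis lattice points) the fixed locus in the two-dimensional orbit is a copy of $\mathbb G_m$, contributing $0$. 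Summing gives $\chi(F)=r-1$, and your final arithmetic then matches the paper's $\frac{(r-1)(r-2)}{6}+2(r-1)+6$.
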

\begin{proof}
Let \(\widetilde X\to X\) be the crepant resolution constructed above. Then \(\widetilde X\) has the form
\[
\widetilde X=\bigsqcup_{\sigma\in(\Sigma-\Sigma^\prime)/G^\prime}O(\sigma)\sqcup\bigsqcup_{\sigma^\prime\in(\Sigma^\prime-|\tau|)/\bsym S}\widetilde{O(\sigma^\prime)/\bsym T}\sqcup\widetilde{\AA^3/G^\prime}
\]
Where, \(\widetilde{O(\sigma^\prime)/\bsym T}\) is blow-up of \(O(\sigma^\prime)/\bsym T\) along singular locus of \(Y^\prime\), and \(\widetilde{\AA^3/G^\prime}\) is the crepant resolution of \(\AA^3/G^\prime\) constructed in Theorem \ref{thm:subthm}.
Hence, the Euler characteristic of \(\widetilde X\) is computed as
\[
\chi(\widetilde X)=\sum_{\sigma\in(\Sigma-\Sigma^\prime)/G^\prime}\chi(O(\sigma))+\sum_{\sigma^\prime\in(\Sigma^\prime-|\tau|)/\bsym S}\chi(\widetilde{O(\sigma^\prime)/\bsym T})+\chi(\widetilde{\AA^3/G^\prime}).
\]

We denote the set of 3-dimensional cones in \(\Sigma,\Sigma^\prime\) by \(\Sigma_3,\Sigma^\prime_3\) respectively. From the construction of the fan \(\Sigma\), we have
\[\sharp\Sigma^\prime_3=3r-2,\ \sharp\Sigma_3=r^2.\]
Then we have
\begin{align*}
\sum_{\sigma\in(\Sigma-\Sigma^\prime)/G^\prime}\chi(O(\sigma))&=\sharp(\Sigma_3-\Sigma^\prime_3)/6\\
&=\frac{r^2-3r+2}6=\frac{(r-1)(r-2)}6.
\end{align*}

We compute \(\chi(\widetilde{O(\sigma^\prime)/\bsym T})\) for \(\sigma^\prime\in\Sigma^\prime-|\tau|\). The cone \(\sigma^\prime\) is a face of \(c\in\Sigma^\prime_3\). When we write the affine open subspace \(U_c\) of \(Y^\prime\) corresponding to \(c\) as \(U_c=\Spec k[s,t,u]\), the action of \(\bsym T\) on \(U_c\) is defined by
\[
s\mapsto -t,\ t\mapsto -s,\ u\mapsto -u
\]

If \(\dim(\sigma^\prime)=3\), then \(O(\sigma^\prime)\) is the origin of \(U_c\) and it is a point on the singular locus of \(Y^\prime\). Hence \(\widetilde{O(\sigma^\prime)/\bsym T}\) is \(\PP^1\). Therefore,
\[
\chi(\widetilde{O(\sigma^\prime)/\bsym T})=2.
\]

If \(\dim(\sigma^\prime)=2\), then \(O(\sigma^\prime)\) is the hyperplane \(V(u)\) of \(U_c\). Then \(\bsym T\) acts on \(O(\sigma^\prime)\) freely. Thus
\[
\chi(\widetilde{O(\sigma^\prime)/\bsym T})=\frac{\chi(O(\sigma^\prime))}2=0.
\]

If \(\dim(\sigma^\prime)=1\), then \(O(\sigma^\prime)\) is defined by \(s\ne 0, t\ne 0, u=0\) on \(U_c\). Hence, \(F=\Sing(Y^\prime)\cap U_c\) is defined by \(u=0,s=-t,t\ne 0\), which is isomorphic to \(\AA^1-\{o\}\). As \(U_c/\bsym T\cong \AA^3_k/\diag(1,-1,-1)\), the exceptional set of \(\widetilde{O(\sigma^\prime)/\bsym T}\to O(\sigma^\prime)/\bsym T\) is \(F\times\PP_k^1\). Thus,
\[
\chi(\widetilde{O(\sigma^\prime)/\bsym T})=\frac{\chi(O(c)\backslash F)}2+\chi(F)\chi(\PP^1)=0.
\]

Therefore, we obtain
\[\sum_{\sigma^\prime\in(\Sigma^\prime-|\tau|)/\bsym S}\chi(\widetilde{O(\sigma^\prime)/\bsym T})=2\sharp((\Sigma^\prime_3-\{\tau\})/\bsym S)=2(r-1).
\]
From Theorem \ref{thm:subthm}, \(\chi(\widetilde{\AA^3/G^\prime})=6\).

From Lemma \ref{lem:conjnum2}, we obtain
\begin{align*}
\chi(\widetilde X)&=\frac{(r-1)(r-2)}6+2(r-1)+6\\
&=\frac{(r-1)(r-2)}6+2r+4\\
&=\mathrm{Conj}(G)+3.
\end{align*}
\end{proof}


\begin{thebibliography}{99}
\bibitem{Art} M. Artin, \textit{Covering of the rational double points in characteristic \(p\)}, Complex analysis and algebraic geometry, pp 11-22, 1977
\bibitem{Bat} V. V. Batyrev, \textit{Non-Archimedean integrals and stringy Euler numbers of log-terminal pairs}, J. Eur. Math. Soc. 1, pp 5-33, 1999
\bibitem{CW} H. E. A. E. Campbell, D. L. Wehlau, \textit{Modular Invariant Theory}, Encyclopaedia of Mathematical Sciences 139, Springer, 2011
\bibitem{CDG} Y. Chen, R. Du, Y. Gao, \textit{Modular quotient varieties and singularities by the cyclic group of order \(2p\)}, Comm. Algebra 48, no. 12, pp 5490-5500, 2020
\bibitem{W-Y} M. M. Wood, T. Yasuda, \textit{Mass formulas for local Galois representations and quotient singularities II: dualities and resolution of singularities}, Algebra \& Number Theory, 11, pp 817-840, 2017
\bibitem{Yama} T. Yamamoto, \textit{A counterexample to the McKay correspondence in positive characteristic}, Osaka Univ., Master thesis, 2018
\bibitem{Yama2} T. Yamamoto, \textit{Pathological phenomena in the wild McKay correspondence}, Osaka Univ., Doctor thesis, 2021
\bibitem{Yas} T. Yasuda, \textit{The \(p\)-cyclic McKay correspondence via motivic integration}, Comp. Math. 150, pp 1125-1168, 2014

\end{thebibliography}
\end{document}